\numberwithin{equation}{section}
\def\@tocline#1#2#3#4#5#6#7{\relax
  \ifnum #1>\c@tocdepth 
  \else
    \par \addpenalty\@secpenalty\addvspace{#2}%
    \begingroup \hyphenpenalty\@M
    \@ifempty{#4}{%
      \@tempdima\csname r@tocindent\number#1\endcsname\relax
    }{%
      \@tempdima#4\relax
    }%
    \parindent\z@ \leftskip#3\relax \advance\leftskip\@tempdima\relax
    \rightskip\@pnumwidth plus4em \parfillskip-\@pnumwidth
    #5\leavevmode\hskip-\@tempdima
      \ifcase #1
       \or\or \hskip 1em \or \hskip 2em \else \hskip 3em \fi%
      #6\nobreak\relax
    \dotfill\hbox to\@pnumwidth{\@tocpagenum{#7}}\par
    \nobreak
    \endgroup
  \fi}
\definecolor{Gray}{gray}{0.9}
\pgfplotsset{compat=1.8}
\definecolor{rulecolor}{RGB}{0,71,171}
\definecolor{tableheadcolor}{gray}{0.92}
\newtheorem{theorem}{Theorem}[section]
\newtheorem{lemma}[theorem]{Lemma}
\newtheorem{proposition}[theorem]{Proposition}
\newtheorem{corollary}[theorem]{Corollary}
\theoremstyle{definition}
\newtheorem{definition}[theorem]{Definition}
\newtheorem{remark}[theorem]{Remark}
\newtheorem{assumption}[theorem]{Assumption}
\newcommand{\Acal}{\mathcal{A}}
\newcommand{\Bcal}{\mathcal B}
\newcommand{\Ecal}{\mathcal{E}}
\newcommand{\Fcal}{\mathcal{F}}
\newcommand{\Gcal}{\mathcal{G}}
\newcommand{\Hcal}{\mathcal{H}}
\newcommand{\Kcal}{\mathcal{K}}
\newcommand{\Lcal}{\mathcal{L}}
\newcommand{\Mcal}{\mathcal{M}}
\newcommand{\Ncal}{\mathcal{N}}
\newcommand{\Ffrak}{\mathfrak{F}}
\newcommand{\Ascr}{\mathscr{A}}
\newcommand{\Bbf}{\mathbf{B}}
\newcommand{\Dbf}{\mathbf{D}}
\newcommand{\Ebf}{\mathbf{E}}
\newcommand{\Gbf}{\mathbf{G}}
\newcommand{\Hbf}{\mathbf{H}}
\newcommand{\Ibf}{\mathbf{I}}
\newcommand{\Jbf}{\mathbf{J}}
\newcommand{\Mbf}{\mathbf{M}}
\newcommand{\Tbf}{\mathbf{T}}
\DeclareMathOperator{\diverg}{div}
\DeclareMathOperator{\dist}{dist}
\newcommand{\N}{\mathbb{N}}
\newcommand{\R}{\mathbb{R}}
\newcommand{\loc}{\mathrm{loc}}
\newcommand{\spt}{\mathrm{spt}}
\newcommand{\Sing}{\mathrm{Sing}}
\newcommand{\mbf}{\boldsymbol{m}}
\newcommand{\eps}{\epsilon}
\DeclareMathOperator{\Err}{Err}
\renewcommand{\eps}{\varepsilon}
\newcommand{\vphi}{\varphi}
\newcommand{\mres}{\mathbin{\vrule height 1.6ex depth 0pt width
        0.13ex\vrule height 0.13ex depth 0pt width 1.3ex}}
\title[]{Structure of two-dimensional mod$(q)$ area-minimizing currents near flat singularities: the codimension one case}
\author[A. Skorobogatova]{Anna Skorobogatova}
\address{Institute for Theoretical Sciences, ETH Z\"{u}rich, 8006 Z\"{urich}, Switzerland}
\email{anna.skorobogatova@eth-its.ethz.ch}
\author[L. Spolaor]{Luca Spolaor}
\address{University of California San Diego, Department of Mathematics, 9500 Gilman Drive \#0112, La Jolla, CA 92093-0112, United States of America}
\email{lspolaor@ucsd.edu}
\author[S. Stuvard]{Salvatore Stuvard}
\address{Università degli Studi di Milano, Dipartimento di Matematica, Via Saldini 50, I-20133 Milano (MI), Italy}
\email{salvatore.stuvard@unimi.it}
\begin{document}

\begin{abstract}
    We obtain a fine structural result for two-dimensional mod$(q)$ area-minimizing currents of codimension one, close to flat singularities. Precisely, we show that, locally around any such singularity, the current is a $C^{1,\alpha}$-perturbation of the graph of a radially homogeneous special multiple-valued function that arises from a superposition of homogeneous harmonic polynomials. Additionally, as a preliminary step towards an analogous result in arbitrary codimension, we prove in general that the set of flat singularities of density $\frac{q}{2}$, where the current is ``genuinely mod$(q)$", consists of isolated points.
\end{abstract}

\maketitle

\section{Introduction}

In this and the companion paper \cite{HSSS}, we study the structural properties of two-dimensional area-minimizing currents mod$(q)$, where $q \geq 2$ is an integer, in a sufficiently regular Riemannian manifold $\Sigma$. Recall that these are two-dimensional integer rectifiable currents in $\Sigma$ which are representatives mod$(q)$ of an area-minimizing flat chain mod$(q)$ in $\Sigma$. Roughly speaking, the framework of currents mod$(q)$ provides a resolution to the Plateau problem with multiplicities in the group $\mathbb{Z}_q$ for both the surface and its boundary; we refer the reader to \cite{DLHMS} for precise definitions and relevant background. In contrast to the case of area-minimizing \emph{integral} currents, namely the case when the coefficients group is $\mathbb Z$, this setting permits the formation of codimension 1 singularities such as triple junctions mod$(3)$ (see \cite{JTaylor76}). Such singularities appear naturally in soap films, and are admissible within the more general framework of stable integral varifolds. Despite many groundbreaking results including \cite{Simon_cylindrical,W14_annals,KW-density-2,Min2,MW} for stable minimal hypersurfaces, the regularity theory in the latter framework is yet to be fully understood, particularly in higher codimension. Nevertheless, the study of mod$(q)$ area-minimizing surfaces provides a valuable insight into what might be expected in general.

The study of mod$(q)$ area-minimizing currents dates back to work of Federer \cite{Federer1970} when $q=2$ (non-oriented surfaces), Taylor \cite{JTaylor76} for two-dimensional surfaces in $\R^3$ when $q=3$, and White \cite{White-regularity,White-mod4} who both studied properties of mod$(4)$ area-minimzing hypersurfaces, and established a regularity result for general mod$(q)$ area-minimizing hypersurfaces. In recent years, a significant number of progress has been made on the regularity of area-minimizing currents mod$(q)$ for general moduli $q$, and in general dimension $m$ and codimension $\bar n$; see \cite{DLHMS_linear,DLHMS,DLHMSS,DLHMSS-fine-structure,DLHMSS-excess-decay,MW,DMS-modp,Liu-mod-p}. Two particularly important outcomes of this regularity theory are
\begin{itemize}
    \item[(1)] at any \emph{classical singularity} that has a tangent cone which is an \emph{open book}, comprised of $q$ (not necessarily distinct) half-planes meeting in an interface, this tangent cone is unique and the {classical singularities} are locally an embedded $C^{1,\alpha}$ $(m-1)$-dimensional submanifold;
    \item[(2)] the set of \emph{branch points} (or \emph{flat singularities}), where there exists a tangent cone supported in a plane with multiplicity forms a countably $(m-2)$-rectifiable set within the $m$-dimensional surface.
\end{itemize}
Note that (1) does not say that the current itself is a $C^{1,\alpha}$ perturbation of the open book locally near a classical singularity; indeed, this is only known if either the codimension of the surface is $\bar n = 1$, or if each of the half-planes in the open book has multiplicity one. Otherwise, one needs to rule out the possibility of branch points accumulating to the spine of the open book. We will treat the latter issue in our companion paper \cite{HSSS} joint with Jonas Hirsch.

Here, we build on this regularity theory, to establish a precise structural characterization of a two-dimensional mod$(q)$ area-minimizing surface in codimension $\bar n = 1$, locally around branch points. This can be viewed as being somewhat analogous to the results in \cite{DLSS1,DLSS2,DLSS3} for two-dimensional integral currents. Note that, for a mod$(q)$ area-minimizing current $T$ of arbitrary dimension and codimension, if a tangent cone to $T$ at a point $p$ is (the current associated with) a plane with density $Q$ strictly smaller than half of the modulus $q$ for $T$, then $T$ identifies with an area-minimizing integral current locally around $p$; see \cite[Proposition 2.7]{DMS-modp}. In particular, when $q$ is an odd integer, branch points are absent in codimension $\bar n = 1$ for any dimension $m$, whereas the local structure of $T$ around any branch point $p$ is understood, after the analysis carried out in \cite{DLSS1,DLSS2,DLSS3}, when the dimension is $m=2$ and the codimension $\bar n$ is higher than $1$. For this reason, we are going to work, here and in \cite{HSSS}, under the assumption that $q$ is an even integer, $q=2Q \geq 4$, and a tangent cone to $T$ at $p$ is a plane with density $Q$. Note that such a planar tangent cone is unique, in light of the work \cite{DLHMSS-excess-decay} in the case $\bar n =1$, and \cite{DMS-modp}*{Theorem 11.5, Proposition 13.3, Theorem 2.6} in the case $\bar n >1$. 

\begin{assumption}\label{ass:main}
    Let $q=2Q \geq 4$ be an integer. Let $\bar n$ and $n$ be integers with $n \ge \bar n \geq 1$, and set $l := n-\bar n$. Let $\kappa \in \left( 0, 1 \right)$. Suppose that $T$ is a 2-dimensional area-minimizing current mod$(q)$ in (an open subset of) a complete, $C^{3,\kappa}$-regular Riemannian $(2+\bar n)$-dimensional embedded submanifold $\Sigma$ of $\R^{2+n}$. Suppose, furthermore, that $0 \in \spt(T) \setminus \spt^q(\partial T)$ is a flat singular point of $T$ with $\Theta(T,0)=Q$, and suppose that $\mathbf{C}=Q\llbracket \pi_0 \rrbracket$ is the tangent cone to $T$ at $0$. Without loss of generality, we will assume that $\pi_0 = \R^2\times\{0\} \subset \R^2\times\R^n$, and that $T_0\Sigma = \R^2\times \R^{\bar n} \times \{0\} \subset \R^2\times\R^{\bar n} \times \R^l$. We shall denote $\pi_0^{\perp_0} = \{0_2\}\times \R^{\bar n}\times \{0_{l}\} \simeq {\R^{\bar n}}$ the orthogonal complement of $\pi_0$ in $T_0\Sigma$, whereas the symbol $^\perp$ will indicate orthogonal complement in $\R^{2+n}$. 
\end{assumption}

The first main result of the present manuscript is in codimension $\bar n=1$, and it can be stated as follows.

\begin{theorem}\label{t:main-structure}
    Let $\bar n=1$, and let $q,Q,n,l,\kappa,T,\Sigma$, and $\pi_0$ be as in Assumption \ref{ass:main}. Then, there exist $r_0>0$ and $\alpha \in \left( 0, 1 \right)$ such that, up to a rotation, $T\mres\Bbf_{r_0}$ is a $C^{1,\alpha}$-perturbation of the multigraph of the function $u:\pi_0 \supset B_{r_0}\to \Ascr_Q(\pi_0^{\perp_0})\simeq\Ascr_Q({\R})$ given in polar coordinates by
    \[
    u(r,\theta):=
    \begin{cases}
    \left(\sum_{i=1}^Q \llbracket c_{j,i}^+\, r^{I_0}\sin(I_0\theta)\rrbracket, +1\right) & \mbox{if $(r,\theta) \in U_j^+$}\,, \\
     \left(\sum_{i=1}^Q \llbracket c_{j,i}^-\, r^{I_0} \sin(I_0\theta)\rrbracket, -1\right) & \mbox{if $(r,\theta) \in U_j^-$}\,,
    \end{cases}
    \]
    where $I_0 \in \mathbb N_{\ge 2}$, $j \in \{0,\ldots,I_0-1\}$, $c_{j,i}^\pm\in\R$, and
    \begin{align*}
        U_j^+ &= \left\{(r,\theta) : 0<r<r_0, \quad \tfrac{2j\pi}{I_0} < \theta \leq \tfrac{(2j+1)\pi}{I_0}\right\}\,, \\ U_j^- &= \left\{(r,\theta) : 0<r<r_0, \quad \tfrac{(2j+1)\pi}{I_0} < \theta \leq \tfrac{(2j+2)\pi}{I_0} \right\}\,.
    \end{align*}
\end{theorem}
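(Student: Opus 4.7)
The plan is to follow, in spirit, the blow-up scheme developed by De Lellis--Spadaro--Spolaor in \cite{DLSS1,DLSS2,DLSS3} for two-dimensional area-minimizing integral currents, adapted to the mod$(q)$ setting through the special multi-valued function framework of \cite{DLHMS_linear,DLHMS}. First, I would invoke the excess decay and planar tangent-cone uniqueness from \cite{DLHMSS-excess-decay} at codimension-one flat singularities to extract from $T\mres\Bbf_{r_0}$ a $W^{1,2}$ graphical approximation by a function $u:B_{r_0}\to\Ascr_Q(\pi_0^{\perp_0})\simeq \Ascr_Q(\R)$ carrying orientation labels $\pm 1$ that encode the mod$(2Q)$ orientation. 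By the minimality of $T$ together with the usual harmonic-approximation estimates, $u$ is an almost Dir-minimizer in the sense of Almgren, with errors controlled by powers of the $L^2$-excess.

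Next, I would carry out the Almgren frequency-function analysis at the origin: the frequency $I_{u,0}(r)$ is (almost) monotone non-decreasing, and its limit $I_0 := \lim_{r \to 0^+} I_{u,0}(r)$ is the homogeneity degree of any tangent map $v$ obtained by rescaling $u$ and normalizing by its $L^2$-norm on $\partial B_r$. Since $0$ is a flat singularity rather than a regular point, $I_0 \geq 2$. The tangent map $v$ is a $Q$-valued, $I_0$-homogeneous, Dir-minimizing special function on $\R^2$ valued in $\R$, and classifying such objects in the codimension-one setting---where each sheet is a scalar harmonic extension across any sector in which the sign label is constant---one concludes that each sheet must be, in polar coordinates, a scalar multiple of the harmonic polynomial $r^{I_0}\sin(I_0(\theta-\theta_0))$, with the $\pm 1$ labels alternating between the adjacent sectors of aperture $\pi/I_0$ on which this polynomial changes sign. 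After a rotation sending $\theta_0$ to $0$, this yields precisely the sectorial description $U_j^{\pm}$ in the statement.

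The main obstacle is upgrading this blow-up analysis to the uniqueness of the tangent map together with a polynomial rate of convergence. I would use a Lojasiewicz--Simon-type inequality for the Dirichlet energy at the homogeneous profile $v$, in the spirit of \cite{DLSS3}, to control the defect $I_{u,0}(r)-I_0$ and establish its integrability in $\log r$ over dyadic annuli, which delivers the summability required for uniqueness of the tangent. The resulting polynomial decay rate, combined with the graphical approximation and interior Schauder-type estimates for the almost Dir-minimizer $u$, promotes the convergence to $C^{1,\alpha}$ and yields the stated $C^{1,\alpha}$-perturbation property, after possibly refining the rotation that aligns $\pi_0$ with the sheets of $v$. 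The two-dimensional codimension-one setting permits substantial simplifications relative to \cite{DLSS1,DLSS2,DLSS3}: the scalar structure of the sheets rigidifies the classification of tangent maps, and the $\pm 1$ labels lie in a finite combinatorial set, ruling out the continuous families of tangent maps that one must otherwise handle in higher codimension.
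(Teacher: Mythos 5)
Your high-level outline (graphical approximation, frequency function, classification of homogeneous tangents, decay rate, upgrade to $C^{1,\alpha}$) matches the paper's architecture, and your classification of the tangent maps as alternating-sign sectors of $r^{I_0}\sin(I_0\theta)$ is exactly Theorem \ref{t:tangents}. However, two of your steps have genuine gaps. First, the uniqueness-with-rate step: you propose a {\L}ojasiewicz--Simon inequality at the homogeneous profile. No such inequality is available off the shelf for special $Q$-valued Dir-minimizers (the space $\Ascr_Q$ is not a Banach manifold, so the analytic functional framework does not apply), and even granting one, summability of the frequency defect over dyadic annuli gives uniqueness of the tangent but not automatically a \emph{polynomial} rate --- for that you would need the integrable ({\L}ojasiewicz exponent $1/2$) case, which you do not establish. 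The paper instead proves the rate directly: it shows the reparametrized normal approximation $\Ncal$ on the center manifold is almost Dir-minimizing (Proposition \ref{p:almost-min}), builds an explicit competitor by harmonically extending the Fourier modes of $\Ncal|_{\partial B_r}$ sector by sector with a spectral-gap comparison of eigenvalues (Lemma \ref{l:competitor-estimates}), and deduces the differential inequality $\Ibf'(r)\geq \tfrac{2}{r}(\Ibf(r)-\alpha)(\alpha+\gamma-\Ibf(r))-\Ecal_2(r)$, whence $|\Ibf(r)-\alpha|\leq Cr^\gamma$. Note also that this analysis must be run on the $\Mcal$-normal approximation, not on a first Lipschitz approximation over $\pi_0$ as you suggest: without the center manifold the average of the sheets is not controlled and the frequency function is not bounded below.

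Second, and more seriously, your final step does not work as stated. The $L^2$/frequency decay gives power-law convergence of the rescalings of $T$ to the homogeneous multigraph $\Gcal$ only in an integral (two-sided $L^2$ excess) sense, and there is no ``interior Schauder estimate'' that upgrades this to a $C^{1,\alpha}$-perturbation of the current: the multigraph is genuinely singular along the interfaces $\{\theta=k\pi/I_0\}$ where the sign label flips, and the current may a priori carry further singularities accumulating at $0$. The paper closes this by a pointwise dichotomy at each $p\in\spt^q(T)\setminus\{0\}$ at scale $r(p)=c_0|p|$: either the rescaled current satisfies the hypotheses of the open-book $\varepsilon$-regularity theorem of \cite{DLHMSS} (classical singularity case), or all densities drop below $Q$ and White's mod$(q)$ hypersurface regularity applies; a $(5r)$-covering argument then assembles the local $C^{1,\alpha}$ graphs. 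This is precisely where the hypothesis $\bar n=1$ enters (cf.\ Remark \ref{r:higher-codim}): in higher codimension one cannot exclude lower-density branch points accumulating at the origin, so the conclusion fails by this method. Your proposal never engages with this dichotomy, which is the essential content separating the codimension-one statement from the general one.
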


Our second main result is that the top density branch points, namely, those that are ``necessarily mod$(q)$", are isolated in any codimension. Let $\Sing_f(T)$ denote the set of flat singularities of a moq$(q)$ area-minimizing current $T$.

\begin{theorem}\label{t:isolated-modq-branch-pts}
    Let $q,Q,n,l,\kappa,T,\Sigma$, and $\pi_0$ be as in Assumption \ref{ass:main}. Then the set 
    \[
        \mathfrak{F}_Q(T) := \{p\in \Sing_f(T): \Theta(T,p) = Q\}
    \]
    is discrete.
\end{theorem}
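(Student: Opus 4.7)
The plan is to argue by contradiction using the center manifold approximation of $T$ near a flat density-$Q$ singularity, a frequency-based blow-up, and the classification of homogeneous planar Dir-minimizers that underlies Theorem~\ref{t:main-structure}. Suppose $p_0\in\mathfrak{F}_Q(T)$ admits a sequence $p_k\in\mathfrak{F}_Q(T)\setminus\{p_0\}$ with $p_k\to p_0$.

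First, I would invoke the center manifold construction for mod$(q)$ area-minimizers at flat density-$Q$ points, as developed in \cite{DLHMSS,DLHMSS-fine-structure,DLHMSS-excess-decay} and, in our situation, pinned down by the tangent-cone uniqueness recalled just before Assumption~\ref{ass:main}: on a ball $\mathbf{B}_{r_0}(p_0)$, this yields a $C^{3,\alpha_0}$ two-dimensional center manifold $\mathcal{M}$ tangent to $\pi_0$ at $p_0$, together with a ``special'' $\Ascr_Q$-valued approximating map $N$ on $\mathcal{M}$ whose graph coincides with $T$ outside a controlled error set. Under this identification each flat singularity of $T$ with density $Q$ near $p_0$ projects to a \emph{collapse point} $x\in\mathcal{M}$ of $N$, that is, a point at which all $Q$ sheets of $N$ share the same value on $\mathcal{M}$, and where the Almgren frequency $I_N(x,0^+)$ is $\ge 1$. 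Let $x_0$ and $x_k$ be the projections of $p_0$ and $p_k$; then $x_k\to x_0$.

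Next, I would perform a frequency-based blow-up at $x_0$. Setting $I_0:=I_N(x_0,0^+)$ and $r_k:=|x_k-x_0|$, the rescalings $N_k(y):=r_k^{-I_0}\,N(x_0+r_k y)$ are uniformly bounded in $W^{1,2}_{\mathrm{loc}}$ with controlled frequency, by the almost-monotonicity of $I_N$ in the mod$(q)$ setting. The compactness theory for special $\Ascr_Q$-valued (almost) Dir-minimizers then extracts a subsequence converging strongly in $W^{1,2}_{\mathrm{loc}}(\mathbb{R}^2)$ to a nontrivial $I_0$-homogeneous Dir-minimizing special map $N_\infty$. After a further extraction, the rescaled points $\tilde x_k:=(x_k-x_0)/r_k\in\mathbb{S}^1$ converge to some $\tilde x_\infty$ with $|\tilde x_\infty|=1$. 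A quantitative persistence-of-frequency argument for the bound at $x_k$, propagated through the rescaling and the strong $W^{1,2}_{\mathrm{loc}}$ convergence, then forces $\tilde x_\infty$ to be a collapse point of $N_\infty$ with $I_{N_\infty}(\tilde x_\infty,0^+)\ge 1$.

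The final step is the two-dimensional classification of homogeneous special $\Ascr_Q$-valued Dir-minimizers---an Almgren--Chang-type analysis analogous to what underlies Theorem~\ref{t:main-structure}: $N_\infty$ is a superposition of $I_0$-homogeneous harmonic polynomials of the form appearing in Theorem~\ref{t:main-structure}, and its collapse set is exactly $\{0\}\subset\mathbb{R}^2$. Since $|\tilde x_\infty|=1$, this contradicts the previous step and proves the theorem. I expect the main technical obstacle to be the persistence-of-frequency step: one must upgrade the qualitative lower bound $I_N(x_k,0^+)\ge 1$ into a quantitative one that is effective at scales comparable to $r_k$ and that survives the strong $W^{1,2}_{\mathrm{loc}}$ limit---this relies on the quantitative frequency and excess-decay machinery of \cite{DLHMSS-excess-decay,DLHMSS-fine-structure} and is precisely what forces the entire argument to be carried out on the center manifold rather than directly on $T$.
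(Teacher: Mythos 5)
Your overall architecture (contradiction, reduction to the center manifold via the excess decay, blow-up of the normal approximation along the scales $r_k\sim|x_k-x_0|$, persistence of collapse points, classification of the homogeneous limit) is the same as the paper's. However, your final step contains a genuine error. By Theorem \ref{t:tangents}, the nontrivial $\alpha$-homogeneous limit $N_\infty$ has, on each sector $\{\tfrac{j\pi}{\alpha}<\theta<\tfrac{(j+1)\pi}{\alpha}\}$, sheets of the form $a_i\, r^\alpha\sin(\alpha\theta)$; consequently \emph{all} $Q$ sheets vanish on the $2\alpha$ rays $\{\theta=\tfrac{j\pi}{\alpha}\}$, i.e.\ on the entire nodal set of the harmonic polynomial $|u^+|-|u^-|$. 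The collapse set of $N_\infty$ is therefore a union of rays through the origin, not $\{0\}$, so locating a collapse point $\tilde x_\infty$ with $|\tilde x_\infty|=1$ yields no contradiction. Worse, the frequency bound you propagate, $I_{N_\infty}(\tilde x_\infty,0^+)\ge 1$, is exactly \emph{attained} at every nonzero point of the nodal rays (there the tangent function is $1$-homogeneous, an open book), so the quantitative persistence step, even if carried out, does not close the argument.

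To repair this you must exploit that flat singular points of density $Q$ have singularity degree strictly larger than $1$ (in fact $\ge 2$ by the classification, as noted after Corollary \ref{c:one-cm}), in one of two ways. Either propagate the lower bound $I\ge 2$ (not $\ge 1$) from the points $x_k$ to the limit point, which then contradicts $I_{N_\infty}(\tilde x_\infty,0^+)=1$ at $\tilde x_\infty\neq 0$; or, as the paper does, use only the persistence of $Q$-points to place $\tilde x_\infty$ on a nodal ray, observe that the tangent function to $N_\infty$ there is an open book, and then invoke the decomposition/$\varepsilon$-regularity theorem at open-book singularities (\cite{DMS-modp}*{Theorem 2.6}, whose hypotheses are verified for $T_{z,\rho}$ via the strong $W^{1,2}_{\loc}$ convergence) to conclude that $T$ near $p_k$ splits into classical sheets --- contradicting $p_k\in\mathfrak F_Q(T)$. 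A further minor point: the normalization $r_k^{-I_0}N(x_0+r_k\,\sbullet)$ presupposes the decay estimates of Theorem \ref{t:frequency-decay}; at this stage one should normalize by $\Dbf(r_k)^{-1/2}$ (or the $L^2$ norm) to guarantee a nontrivial limit.
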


\begin{remark}\label{r:higher-codim}
    Under the hypotheses of Assumption \ref{ass:main}, even in the case $\bar n > 1$, in Theorem \ref{t:frequency-decay} we are able to obtain uniqueness with a rate of decay of the \emph{fine blow-up} $u$ of $T$ at $0$ (see \cite{DMS-modp}*{Section 11.2}) with the structure given by Theorem \ref{t:main-structure}. However, we are not able to say that $T$ is a $C^{1,\alpha}$-perturbation of the multigraph of $u$ via the methods herein. This is due to the possibility that lower density \emph{classical} branch point singularities may be accumulating to the origin; this will be ruled out in our forthcoming work \cite{HSSS}.
\end{remark}
    
\section*{Acknowledgments}
The authors thank Jonas Hirsch for numerous fruitful discussions. A.S. is grateful for the generous support of Dr. Max R\"ossler, the Walter Haefner Foundation and the ETH Z\"urich Foundation. L.S. acknowledges the support of the NSF Career Grant DMS 2044954. The research of S.S. was supported by project PRIN 2022PJ9EFL "\textit{Geometric Measure Theory: Structure of Singular Measures, Regularity Theory and Applications in the Calculus of Variations}", funded by the European Union under NextGenerationEU and by the Italian Ministry of University and Research.

\section{Strategy of proof and structure of article}
The article is structured as follows. In Section \ref{s:prelim} we provide the relevant notation and preliminary results. In Section \ref{s:tangents} we establish a classification of blow-up functions relative to center manifolds obtained via the blow-up procedure of \cite{DLHMS}, in the case where the dimension of $T$ is two. This is the analogue of the codimension one result \cite{DLHMSS-fine-structure}*{Proposition 2.4}. This allows us to assume that we are working with a single center manifold and $\Mcal$-normal approximation throughout. In Section \ref{s:almost-min} we obtain variational estimates for Almgren's frequency function and related quantities associated to the reparametrization $\Ncal$ of the $\Mcal$-normal approximation $N$. These are analogous to those valid for a regularized version of Almgren's frequency function for $N$. In this section, we additionally verify that $\Ncal$ satisfies a suitable almost-minimizing property for the Dirichlet energy. In Section \ref{s:decay}, we use the almost Dir-minimizing property of $\Ncal$ from the preceding section with a suitable choice of competitor, in order to obtain a decay property for Almgren's frequency function. This in turn induces a power law decay for suitably scaled versions of the $L^2$-height on spheres and the Dirichlet energy on balls for $\Ncal$. From the decay of these 3 quantities, we easily conclude our main result Theorem \ref{t:main-structure} when the codimension is one. 

\section{Notation and preliminaries}\label{s:prelim}
Balls of radius $r$ and center $p$ in $\R^{2+n}$ are denoted by $\Bbf_r(p)$. Two-dimensional planes are denoted by $\pi$, with $\pi_0$ denoting the plane $\R^2\times \{0\}\subset \R^{2\times n}$. We will write $\omega_2$ for the Lebesgue measure of the unit ball in $\R^2$. We use the notation $B_r(p,\pi)$ for the disk $\Bbf_r(p) \cap \pi$ around a point $p\in \pi$. If $p=0$, we will omit dependency on $p$, and when $\pi=\pi_0$, we will often additionally omit dependency on $\pi_0$. We refer the reader to \cite{DLHMS} for the notion of non-oriented tilt excess $\Ebf^{no}(T,\Bbf_r(p), \pi)$ in $\Bbf_r(p)$ relative to a plane $\pi$, and we recall that
\[
    \Ebf^{no}(T,\Bbf_r(p)) = \inf_{\text{planes $\pi$}} \Ebf^{no}(T,\Bbf_r(p), \pi)\,.
\]
Given a point $p\in \R^{2+n}$ and a radius $r>0$ we use the notation $T_{p,r}$ to denote the rescaled current $(\iota_{p,r})_\sharp T$, where $\iota_{p,r}(y) := \frac{y-p}{r}$.

We will henceforth work under the following assumption, which may be achieved by translating and scaling.

\begin{assumption}\label{ass:main+}
	In addition to Assumption \ref{ass:main}, we assume that $\Sigma^{2+\bar n} \subset \R^{2+n}$ is a $C^{3,\kappa}$-regular submanifold with empty boundary in $\mathbf B_{7\sqrt{2}}$, and that, for each $p \in \Sigma$, $\Sigma$ is the graph of a $C^{3,\kappa}$ map $\Psi_p \colon T_p\Sigma \to T_p\Sigma^\perp$. We set $\mathbf{c}(\Sigma):=\sup_{p\in\Sigma\cap\Bbf_{7\sqrt{2}}}\|D\Psi_p\|_{C^{2,\kappa}}$. We additionally assume that $T$ has support in $\Sigma \cap \overline{\mathbf B}_{6\sqrt{2}}$, that it is area-minimizing ${\rm mod}(q)$ in $\Sigma \cap {\mathbf B}_{6\sqrt{2}}$, and that, for some $\varepsilon_0\in\left(0,1\right)$:
    \begin{align}
        \partial T \mres \mathbf B_{6\sqrt{2}} &= 0 \quad {\rm mod}(q) \,, \\
        \|T\|({\mathbf B}_{6\sqrt{2}\rho}) &\leq (Q \omega_2 (6\sqrt{2})^2 + \varepsilon_0^2)\rho^2 \quad \forall\,\rho\leq 1\,,\\
        \mathbf{E}^{no}(T,\mathbf B_{6\sqrt{2}}) &= \mathbf{E}^{no}(T,\mathbf{B}_{6\sqrt{2}},\pi_0)\,,\\
        \mathbf{c}(\Sigma)^2 \leq \varepsilon_0^2\,.
    \end{align}
    Finally, in light of \cite[Lemma 17.7]{DLHMS}, we may assume that $\varepsilon_0$ is so small (depending on $\bar n$ and $n$) that (a suitable modification and extension outside of $\mathbf B_{6\sqrt{2}}$ of) $\Sigma$ is the graph of a $C^{3,\kappa}$ map $\Psi \colon T_0\Sigma \simeq \R^{2+\bar n} \to T_0\Sigma^\perp \simeq \R^l$ satisfying $\Psi(0)=0$ and $\|D\Psi\|_{C^{2,\kappa}} \leq C_0\,\eps_0$ for a geometric constant $C_0=C_0(\bar n,n)$.
    \end{assumption}

We recall the notion of a \emph{center manifold} $\mathcal M$, first introduced by Almgren in \cite{Almgren_regularity} in the framework of area-minimizing integral currents (see also \cite{DLS16centermfld}). Its analogue for area-minimizing currents mod$(q)$ was constructed in \cite{DLHMS} under the validity of Assumption \ref{ass:main+}, and in the context herein, it is a $C^{3,\kappa}$ two-dimensional submanifold of $\Sigma$ obtained as the graph of a map $\vphi$ defined on $B_{3/2} \subset \pi_0$. In the coordinate system defined in Assumption \ref{ass:main}, and denoting $z$ the variable in the plane $\pi_0$, we have $\vphi(z)=(\bar\vphi(z),\Psi(z,\bar\vphi(z))) \in \R^{\bar n} \times \R^l$ for a function $\bar\vphi \colon B_{3/2} \to \R^{\bar n}$, so that when $\mathbf{\Phi}(z)=(z,\vphi(z)) \in \R^2 \times \R^{n}$ denotes the graph map of $\vphi$ then $\mathbf{\Phi}:B_{3/2}\to \Sigma$ and $\Mcal = \mathbf{\Phi}(B_{3/2})$. Following \cite{DLHMS}, we may associate to $\Mcal$ a \emph{normal approximation} for $T$, which is a \emph{special $Q$-valued} Lipschitz map $N: \Mcal \to \Ascr_Q(\R^{2+n})$ that approximates $T$ effectively (see \cite{DLHMS}*{Theorem 17.24, Corollary 17.25}). The latter implicitly comes with a pair $(\Kcal,F)$, where $F$ is the graph map associated with $N$, namely the map $F \colon \Mcal \to \Ascr_Q(\R^{2+n})$ given by 
\[
F(x) = 
\begin{cases}
    \left( \sum_{i=1}^Q \llbracket x + N_i^+(x)\rrbracket, +1 \right) &\mbox{if $x \in \Mcal_+ \cup \Mcal_0$}\,,\\
    \left( \sum_{i=1}^Q \llbracket x + N_i^-(x)\rrbracket, -1 \right) &\mbox{if $x \in \Mcal_-$}\,,
\end{cases}
\]
and $\Kcal\subset \Mcal$ is the (closed) domain of graphicality for which $\Tbf_F\mres \mathbf{p}^{-1}(\Kcal) = T\mres \mathbf{p}^{-1}(\Kcal)$. Here, $x+N_i^\pm(x)$ identifies the point in $\R^{2+n}$ obtained by translating $x\in\Mcal$ according to the vector $N_i^\pm(x)$: these are such that $N_i^\pm(x) \in T_x\Mcal^\perp$, and $x+N_i^\pm(x)\in\Sigma$ for every $i$ and for every $x$. Furthermore, $(\Mcal_+,\Mcal_-,\Mcal_0)$ is the canonical decomposition of $\Mcal$ induced by $N$, $N^\pm$ are the positive and negative parts of $N$, and $\mathbf{p}$ is the nearest point projection map associated to $\Mcal$; see \cite{DLHMSS}*{Assumption 17.21}. We refer the reader to \cite{DLHMS_linear} for the necessary background on special $Q$-valued maps, and to \cite{DLHMS} for all additional relevant terminology and notation surrounding the center manifold in this setting, which we will adopt herein also.

We will further make use of \emph{Almgren's frequency function}, also originally considered in \cite{Almgren_regularity}. 
Recall that for a (special) Dir-minimizing map $u: \Omega \subset \R^m \to \Ascr_Q(\R^d)$ on an open domain $\Omega\subset \R^m$, the frequency of $u$ around a given center $x\in \Omega$ at scale $r\in (0,\dist(x,\partial\Omega))$ is given by
\[
I_u(x,r) := \frac{rD_u(x,r)}{H_u(x,r)}\,,
\]
where
\[
H_u(x,r) = \int_{\partial B_r(x)} \, |u(y)|^2\, d\mathcal{H}^{m-1}(y), \qquad D_u(x,r) = \int_{B_r(x)} |Du|^2(y) \, dy\,.
\]
It is a classical fact that $r\mapsto I_u(x,r)$ is monotone non-decreasing for each $x\in \Omega$, and takes a constant value $\alpha$ if and only if $u$ is radially homogeneous of degree $\alpha$ relative to $x$ (see \cite{DLHMS}*{Section 9} for the derivation of this fact in the $\Ascr_Q$-valued setting). In particular, the limit
\[
	I_u(x,0) := \lim_{r\downarrow 0} I_u(x,r)
\]
exists. 

Following \cite{DLHMS} (cf. \cite{DLS16blowup}), we may subdivide the interval $(0,1]$ of scales around the origin into a collection of mutually disjoint intervals $(s_j,t_j]$ with $t_{j+1} \leq s_j$ for each $j$, referred to as \emph{intervals of flattening}, such that for every $r\in(s_j,t_j]$,
\begin{itemize}
    \item $\Ebf^{no}(T,\Bbf_{6\sqrt{2}{t_j}}) \leq \eps_3^2$,
    \item $\Ebf^{no}(T,\Bbf_{6\sqrt{2}r}) \leq C (\tfrac{r}{t_j})^{2-2\delta_2} \mbf_{0,j}$,
\end{itemize}
where
\begin{equation}\label{e:m0}
    \mbf_{0,j} := \max\{\Ebf^{no}(T,\Bbf_{6\sqrt{2}t_j}), \eps_0^2 t_j^{2-2\delta_2} \},
\end{equation}
and $\eps_3,\delta_2$ are as in \cite{DLHMS}. Note that we may safely take this definition of $\mbf_{0,j}$ in place of the one in \cite{DLHMS}, in light of the scaling of $\boldsymbol{c}(\Sigma_{0,t_j})$ (cf. \cite{DLSk1, DMS-modp}). Associated to each interval $(s_j,t_j]$, we may construct a center manifold $\Mcal_j$ for $T_{0,t_j}\mres\Bbf_{6\sqrt{2}}$ with associated normal approximation $N_j$, as described above. By rescaling, we henceforth work under the following assumption.

\begin{assumption}\label{a:main-m0}
    Suppose that $n$, $\bar n$, $q=2Q$, $T$, $\Sigma$ and $\pi_0$ are as in Assumptions \ref{ass:main} and \ref{ass:main+}. Suppose in addition that $\eps_0$ is chosen small enough so that $t_0=1$ and $\mbf_{0,0}\leq \eps_3^2$.
\end{assumption}

Recalling \cite{DLHMSS-fine-structure}*{Proposition 2.3 \& 2.4} (see also \cite{Sk-modp}*{Proposition 2.3}), when $\bar n=1$, there is a single interval of flattening $(s_0,t_0]=(0,1]$ with corresponding center manifold $\Mcal = \Mcal_0$ and normal approximation $N=N_0$, regardless of the dimension of $T$. The key to this conclusion is the classification of tangent functions and thus possible limiting frequency values at scale 0 given by \cite{DLHMSS-fine-structure}*{Theorem 3.6}. In the succeeding section, we will demonstrate that this remains true when $T$ is two-dimensional, regardless of the codimension $\bar n$, by an analogous classification of tangent functions.

\section{Classification of 2-dimensional tangent functions}\label{s:tangents}

\begin{theorem}\label{t:tangents}
    Suppose that $u\in W^{1,2}(\R^2;\Ascr_Q(\R^{\bar n}))$ is a non-trivial, radially $\alpha$-homogeneous Dir-minimizer with $\boldsymbol{\eta}\circ u = 0$ and such that each of $\Omega^+$, $\Omega^-$ consists of at least one connected component. Let $u^\pm$ be maps associated to the respective positive and negative regions of $u$ (see Section \ref{s:prelim}).

    Then $\alpha$ is a positive integer and the scalar function $|u^+|-|u^-|$ is an $\alpha$-homogeneous harmonic polynomial whose nodal set is $\Sing(u)$.
\end{theorem}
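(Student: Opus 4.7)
The plan is to exploit the two-dimensional setting to decompose $u$ into single-valued harmonic sheets on the connected components of the regular set, then pin down the global geometry via continuity at sign interfaces and stationarity of $u$.

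First, $\Sing(u)\setminus\{0\}$ is a cone by radial $\alpha$-homogeneity, and the two-dimensional regularity theory for (special) $Q$-valued Dir-minimizers forces it to have empty interior with at most $\mathcal{H}^1$-rectifiable structure; hence it is a finite union of rays from the origin, and each connected component of the regular set is a simply connected sector. On each such sector one has a single-valued decomposition $u|_{\text{sector}}=\sum_{i=1}^{Q}\llbracket u_i\rrbracket$ with each $u_i\colon\text{sector}\to\R^{\bar n}$ harmonic, and radial $\alpha$-homogeneity together with $\Delta u_i=0$ yields $u_i(r,\theta)=r^\alpha\vec{\xi}_i(\theta)$ where $\vec{\xi}_i''+\alpha^2\vec{\xi}_i=0$; hence $\vec{\xi}_i\in\spn\{\cos(\alpha\cdot),\sin(\alpha\cdot)\}^{\bar n}$.

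Second, at any ray in $\partial\Omega^+\cap\partial\Omega^-$, continuity of $u$ in the metric on $\Ascr_Q(\R^{\bar n})$ --- where the $+$ and $-$ basepoints can only be reconciled through $Q\llbracket 0\rrbracket$ --- forces every sheet $\vec{\xi}_i$ to vanish on that ray. Each sector of angular width $\Delta$ therefore carries Dirichlet data at both its boundary rays, so $\alpha\Delta\in\pi\N_{\geq 1}$ and $\vec{\xi}_i(\theta)=\vec{c}_i\sin(\alpha(\theta-\theta_{\mathrm{left}}))$; the case $\alpha\Delta\in\pi\N_{\geq 2}$ is ruled out, because an internal common zero of all sheets would itself be a singular ray, contradicting the definition of the sector as a component of the regular set. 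Summing the resulting widths $\pi/\alpha$ across $[0,2\pi)$, and using that the sign labels must alternate between consecutive sectors to yield an even number of them, forces $\alpha\in\N_{\geq 1}$ and produces $2\alpha$ equally spaced sign-change rays.

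Third, to match the sheet-energy coefficients across sectors and assemble $v:=|u^+|-|u^-|$ into a single harmonic polynomial, I would apply the stationarity of $u$ with respect to inner variations, which amounts to the distributional vanishing of the divergence of the stress-energy tensor. Integration against a test vector field transverse to a sign-change ray forces the quantity $r^{-2}\sum_i|\partial_\theta u_i|^2 - \sum_i|\partial_r u_i|^2$ to be continuous across that ray. Since all sheets vanish there, the radial-derivative terms drop out, leaving $\sum_i|\vec{c}^{\,+}_{j,i}|^2=\sum_i|\vec{c}^{\,-}_{j+1,i}|^2$ for consecutive sectors; iterating around the circle yields a common constant $C^2$. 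Combining this with the previous step, $|u^\pm|(r,\theta)=C r^\alpha|\sin(\alpha(\theta-\theta_j))|$ inside the sector of the corresponding sign (and $0$ in the opposite one), and the alternating sign pattern across the $2\alpha$ sectors glues these pieces into $v(r,\theta)=C r^\alpha\sin(\alpha\theta)$ (up to a rotation), a harmonic polynomial of degree $\alpha$ whose nodal set is exactly $\{\theta\in\pi\Z/\alpha\}=\Sing(u)$. The main obstacles will be the rigorous Dirichlet condition across the sign interface in the special $Q$-valued metric, and the flux-matching identity derived from inner variations at that non-smooth interface; both are handled in the scalar-sheet case in \cite{DLHMSS-fine-structure}*{Theorem 3.6}, and their extension to arbitrary $\bar n$ requires only replacing scalar coefficients by the Euclidean norms $|\vec{c}_i|^2$.
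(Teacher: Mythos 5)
Your proposal is correct and follows essentially the same route as the paper's proof: decompose into conical sectors, solve the eigenvalue ODE with Dirichlet data forced by $\boldsymbol\eta\circ u=0$ and the collapse of the sheets at sign interfaces to get arcs of length exactly $\pi/\alpha$ (ruling out longer arcs via an interior common zero), and then use stationarity to match $\sum_i|\vec c_i|^2$ across consecutive sectors so that $|u^+|-|u^-|$ assembles into a single $\alpha$-homogeneous harmonic polynomial. The only cosmetic difference is in the flux-matching step, where the paper blows up at an interface point and reduces to a one-dimensional Dir-minimizer whose inner variation forces $|Dh|$ to be constant, whereas you invoke the divergence-free stress-energy tensor tested against a field transverse to the ray --- the same stationarity input in a slightly different package.
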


\begin{proof}
    Let $U$ be a connected component of $\Omega^+$. Since $u$ is radially homogeneous, there exists an arc $I\subset \partial B_1$ such that $U = \{t I : t \in (0,\infty)\}$. Now, $u^+|_U$ lies in $W^{1,2}(U;\Acal_Q(\R^{\bar n}))$. Thus, we may use \cite{DLS_MAMS}*{Proposition 1.2} to find a selection $v_1,\dots, v_Q \in W^{1,2}(I,\R^{\bar n})$ such that
    \[
        u^+|_U (r,\theta) = \sum_{i=1}^Q\llbracket r^\alpha v_i(\theta) \rrbracket\,.
    \]
    Moreover, since $\boldsymbol\eta\circ u=0$, we must have $v_1=\cdots=v_Q=0$ on $\partial I$. On the other hand, since $U$ is a single connected component of $\Omega^+$, {there exist $i \neq j$ such that} $v_i(\theta) \neq v_j(\theta)$ for each $\theta \in I$. In addition, the first variations for $u$ guarantee that $v_i$ solve the eigenvalue equation
    \[
        v_i''(\theta) +\alpha^2 v_i(\theta) = 0\,.
    \]
    This has (up to rotation) a unique solution
    \[
        v_i(\theta) = a_i\sin(\alpha \theta)\,,
    \]
    for some $a_i\in \R^{\bar n}$. When combined with the boundary data and the fact that the $v_i$ cannot all agree in the interior of $I$, forces the condition that (up to rotation) $I$ consists of the interval of angles $(0,\tfrac{\pi}{\alpha})$. Since this is true for any such connected component $U$ of $\Omega^+$ or $\Omega^-$, this forces $\alpha \in \N$, and thus $I$ has length $\frac{\pi}{\alpha}$.
    
    To see that $|u^+|-|u^-|$ is an $\alpha$-homogeneous harmonic polynomial, we argue in the same spirit as \cite{DLHMSS-fine-structure}*{Proof of Theorem 3.6}. For each such connected component $U$, let $a(U) = (a_1,\dots, a_Q)$ for $a_i$ as above (which differ among different components $U$).
    Assuming, by rotation, that the connected components $U$ of $\Omega^\pm$ have corresponding arcs $I = \{\tfrac{j\pi}{\alpha} < \theta < \tfrac{(j+1)\pi}{\alpha}\}$, consider the function $p:\R^2 \to \R$ defined in polar coordinates by
    \[
        p(r,\theta) = \begin{cases}
            |a(U)| r^\alpha {|} \sin (\alpha\theta){|} & \text{if $(r,\theta) \in U\subset \Omega^+$} \\
            - |a(U)| r^\alpha {|}\sin (\alpha\theta){|} & \text{if $(r,\theta) \in U\subset \Omega^-$} \\
            0 & \text{otherwise.}
        \end{cases}
    \]
    Observe that $p$ is Lipschitz, $\alpha$-homogeneous and harmonic on $\Omega^+ \cup \Omega^-$. It remains to verify that $p$ is harmonic across $\Omega_0$. To this end, take $U_+, U_-$ connected components of $\Omega^+, \Omega^-$ respectively, such that $\partial U_+ \cap \partial U_- \neq \emptyset$. Let $x\in \partial U_+ \cap \partial U_-{\setminus \{0\}}$. Consider the differential $Dp^\pm(x)$ of $p$ at $x$ from either side. Clearly the tangential derivative $D_\tau p^\pm(x)$ in the direction of $\partial U^\pm$ is zero. On the other hand, taking a tangent function $g$ to $u$ (cf. \cite{DLS_MAMS}*{Section 3.5}, \cite{DLHMS_linear}*{Proof of Proposition 10.3}) at $x$, we observe that $g$ is translation-invariant in the direction spanned by $x$, and thus identifies with a 1-dimensional homogeneous Dir-minimizer $h: \R \to \Ascr_Q(\R^{\bar n})$. 
    Since the inner variation \cite{DLHMS_linear}*{Proposition 7.1} in this case simplifies to
    \[
        \int_\R |Dh|^2 \vphi ' = 0 \qquad \forall\, \vphi \in C_c^\infty(\R)\,,
    \]
    we deduce that $|Dh|$ is constant. Returning to $p^\pm$, we conclude that $|\partial_\nu p^+(x)| = |\partial_\nu p^-(x)|$, where $\nu$ is the outward unit normal to $U_+$ at $x$. By our definition of $p$, we in fact arrive at
    \[
        \partial_\nu p^+(x) = \partial_\nu p^-(x)\,.
    \]
    Since such $U_\pm$ and $x\neq 0$ are arbitrary, this clearly implies that $p$ is indeed harmonic on the entirety of $\R^2 \setminus \{0\}$, and thus on the entirety of $\R^2$.
\end{proof}

By arguing exactly as in \cite{DLHMSS-fine-structure}*{Section 9, Section 10.2}, Theorem \ref{t:tangents} allows us to obtain almost-quadratic (non-oriented) tilt excess decay and conclude that there is a single center manifold such that the flat singularities of $T$ near the origin are contained in its contact set (see \cite{DLHMS} for the notation).

\begin{corollary}\label{c:one-cm}
    Suppose that Assumption \ref{a:main-m0} holds. Then for every $\delta >0$, there exists $\eps_4(\delta,q,n,\bar n)\in (0,\eps_3^2]$ and $C(\delta,q,n,\bar n)>0$ such that if in addition we have $\mbf_0 \leq \eps_4^2$
    then for every $r\in (0,\tfrac{3\sqrt{2}}{16}]$ we have
    \begin{align*}
        \Ebf^{no}(T,\Bbf_r) &\leq C\mbf_0 r^{2-2\delta}\,.
    \end{align*}
    In particular, there is a single interval of flattening $(s_0,t_0]=(0,1]$ and there exists $\eta(q,n,\bar n)>0$ such that 
    \[
        \mathfrak{F}_Q(T) \cap \Bbf_\eta \subset \mathbf\Phi(\Gamma) \subset \Mcal\,,
    \]
    for the associated center manifold $\Mcal=\Mcal_0$.
\end{corollary}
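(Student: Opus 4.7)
The plan is to replicate, in our two-dimensional arbitrary-codimension setting, the strategy of \cite{DLHMSS-fine-structure}*{Sections 9-10.2} for the codimension one case. The only ingredient there specific to codimension one is the tangent map classification \cite{DLHMSS-fine-structure}*{Theorem 3.6}, and Theorem \ref{t:tangents} is its two-dimensional, arbitrary-codimension analogue. The workflow splits into three pieces: first, upgrade Theorem \ref{t:tangents} to almost-quadratic decay of the non-oriented tilt excess at $0$; second, use that decay to rule out the stopping criterion that would end an interval of flattening before reaching scale $0$; third, apply a similar local analysis at every $p\in\mathfrak{F}_Q(T)$ sufficiently close to $0$ to place each such $p$ in the contact set $\mathbf\Phi(\Gamma)$ of the resulting single center manifold.

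To derive the excess decay I would fix a putative interval of flattening $(s_j,t_j]$ with center manifold $\Mcal_j$ and normal approximation $N_j$, and consider the frequency $I_{N_j}(0,r)$ at the origin. Any subsequential fine blow-up limit of $N_j$ at $0$, in the sense of \cite{DLHMS,DMS-modp}, is a non-trivial radially homogeneous special Dir-minimizer $u$ with $\boldsymbol\eta\circ u = 0$ and both $\Omega^+$ and $\Omega^-$ non-empty---the latter because $0$ is a genuine mod$(q)$ singularity of top density $Q$. Theorem \ref{t:tangents} then forces the homogeneity $\alpha$ of $u$ to be a positive integer, and since $0$ is a flat rather than classical singular point the case $\alpha=1$ is excluded, leaving $\alpha\ge 2$. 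Combined with frequency monotonicity this gives $I_{N_j}(0,0)\ge 2$, and the standard interplay between frequency, spherical height and Dirichlet energy (see \cite{DLHMS}*{Section 9} and \cite{DLHMSS-fine-structure}*{Section 9}) upgrades this into polynomial decay of $D_{N_j}$ at rate $r^{2-2\delta}$. The center manifold comparison estimates between $N_j$ and $T$ from \cite{DLHMS}*{Theorem 17.24, Corollary 17.25}, together with the definition \eqref{e:m0} of $\mbf_{0,j}$, then translate this into the desired bound $\Ebf^{no}(T,\Bbf_r)\le C\mbf_0 r^{2-2\delta}$.

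The stopping criterion defining the endpoints $t_{j+1}$ and $s_j$ of an interval of flattening requires either the smallness bound $\Ebf^{no}(T,\Bbf_{6\sqrt{2}t_j})\le\eps_3^2$ or the polynomial estimate $\Ebf^{no}(T,\Bbf_{6\sqrt{2}r})\le C(r/t_j)^{2-2\delta_2}\mbf_{0,j}$ to fail at some scale. The decay just obtained is strictly stronger than either of these, provided $\eps_4$---and hence $\mbf_0$---is small enough relative to $\eps_3$, so no stopping ever occurs and $(s_0,t_0]=(0,1]$ is the unique interval of flattening, with associated unique center manifold $\Mcal=\Mcal_0$. Finally, the inclusion $\mathfrak{F}_Q(T)\cap\Bbf_\eta\subset\mathbf\Phi(\Gamma)$ follows because $\mathbf\Phi(\Gamma)$ is, by construction, the set of points at which no Whitney cube in the refinement underlying $\Mcal$ is ever stopped; repeating the blow-up analysis at each $p\in\mathfrak{F}_Q(T)$ sufficiently close to $0$ yields an analogous quantitative excess decay centred at $p$, which is incompatible with any stopping condition being triggered there.

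The principal difficulty is concentrated in the first step: upgrading the qualitative lower bound $I_{N_j}(0,0)\ge 2$, valid only at the single point $0$, into a uniform quantitative power-law decay of the physical tilt excess at every scale $r\in(0,3\sqrt{2}/16]$. This upgrade is essentially the entire technical content of \cite{DLHMSS-fine-structure}*{Sections 9-10.2}, and Theorem \ref{t:tangents} is precisely the piece needed to reduce our setting to that machinery.
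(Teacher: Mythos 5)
Your proposal is correct and follows essentially the same route as the paper, which simply observes that the arguments of \cite{DLHMSS-fine-structure}*{Proposition 2.3 \& 2.4} (and the underlying Sections 9--10.2) carry over verbatim once Theorem \ref{t:tangents} replaces the codimension-one classification \cite{DLHMSS-fine-structure}*{Theorem 3.6}. The only ingredient you leave implicit that the paper flags explicitly is the a priori \emph{weaker} power-law tilt-excess decay (\`a la \cite{DLHMSS-excess-decay}*{Theorem 1.3}), which in codimension $\bar n>1$ is supplied by the singularity-degree-$>1$ theory of \cite{DLSk1}*{Proposition 7.2} and \cite{DMS-modp}*{Section 11.3} and is what makes the bootstrap to the $r^{2-2\delta}$ rate run.
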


We omit the proof of Corollary \ref{c:one-cm}, since it follows by the very same reasoning as that in the proofs of \cite{DLHMSS-fine-structure}*{Proposition 2.3 \& 2.4}. Note that no part of the proofs therein rely on the codimension being 1, provided that one has the conclusion of Theorem \ref{t:tangents} and the a priori validity of a weaker power law tilt excess decay analogous to \cite{DLHMSS-excess-decay}*{Theorem 1.3}. We note that the latter indeed holds in higher codimension at every point with singularity degree \emph{strictly larger than 1}; see \cite{DLSk1}*{Proposition 7.2} and the discussion in \cite{DMS-modp}*{Section 11.3}. By the classification of Theorem \ref{t:tangents}, when $T$ is two-dimensional, every flat singular point {of density $Q$} has singularity degree at least 2.

In light of this, we will from now on work under the following assumption.

\begin{assumption}\label{a:one-cm}
    Suppose that $n$, $\bar n$, $q=2Q$, $T$, $\Sigma$ and $\pi_0$ are as in Assumption \ref{a:main-m0}. Moreover, suppose that there is a single interval of flattening $(0,1]$ for the origin, with corresponding center manifold $\Mcal$ and $\Mcal$-normal approximation $N$. Let $\mbf_0 := \mbf_{0,0} \leq \eps_4^2$ be the associated constant given by \eqref{e:m0}.
\end{assumption}

\section{Variational estimates and almost Dir-minimality of $\Ncal$}\label{s:almost-min}
In this section, we work under Assumption \ref{a:one-cm} and demonstrate that the $\Mcal$-normal approximation $N$ exhibits a suitable almost Dir-minimizing property relative to Lipschitz competitors (cf. \cite{DLSS3}*{Section 3}. The latter will play a key role in demonstrating, in the succeeding section, a decay property for the frequency function of $N$. Let us first define the latter. Given a center manifold $\Mcal = \mathbf{\Phi}(B_{3/2})$, and a normal approximation $N \colon \Mcal \to \Ascr_Q(\R^{2+n})$, we set $\Ncal := N\circ \mathbf\Phi$. Note that $\Ncal$ provides a parameterization of $N$ over $B_{3/2}\subset \pi_0$. The functions $\Hbf(r):=H_{\Ncal}(0,r)$ and $\Dbf(r) := D_{\Ncal}(0,r)$ are then well defined for every $r \in \left( 0, \frac32\right)$, and the frequency function of $\Ncal$ around $z=0$ at scale $r$ is given by
\[
\Ibf(r) := I_{\Ncal}(0,r) = \frac{r\,\Dbf(r)}{\Hbf(r)}\,,
\]
which is well defined if $\Hbf(r) \neq 0$. {Note that our definition of the frequency function differs from that in \cite{DLS16blowup,DLHMS}. This is entirely for technical purposes, due to the fact that we will derive the key decay property of the frequency function by comparing Fourier decompositions on the sphere $\partial B_r$. We thus wish to work with the classical (non-regularized) Almgren frequency function for $\Ncal$, in place of the regularized frequency function for $N$, but we will proceed to demonstrate that the former satisfies analogous variational identities and estimates.}

Let $((B_{3/2})_+,(B_{3/2})_-,(B_{3/2})_0)$ be the canonical decomposition of $B_{3/2}$ induced by $\Ncal$, and let $\Fcal$ denote the graph of $\Ncal$, namely
\[
    \Fcal(z) := 
    \begin{cases}
    \left( \sum_i \llbracket \mathbf\Phi(z) + \Ncal^+_i(z)\rrbracket, +1 \right) \qquad \mbox{if $z \in (B_{3/2})_+ \cup (B_{3/2})_0$}\,,\\
     \left( \sum_i \llbracket \mathbf\Phi(z) + \Ncal^-_i(z)\rrbracket, -1 \right) \qquad \mbox{if $z \in (B_{3/2})_-$}\,.
     \end{cases}
\]
Throughout the section, all integrals on regions in $\Mcal$ will be intended with respect to $\Hcal^2$, whereas integrals over $B_r$ and $\partial B_r$ in $\pi_0$ will be intended with respect to the Lebesgue measure and arc-length $\Hcal^1$, respectively. The operator $D$ will denote both derivative in $\pi_0$ and tangential derivative on $\Mcal$, depending on the context. Finally, the tangential and normal derivatives on a circle, say, $\partial B_r$ will be denoted $\nabla_\theta$ and $\partial_\nu$, respectively.

\subsection{Variational estimates}
In order to state the key variational estimates for $\Dbf$, $\Hbf$ and $\Ibf$, we introduce the following additional quantities (cf. \cite{DLS16blowup, DLSS3}):
\begin{align*}
    \Gbf(r) &: = \int_{\partial B_r} |\nabla_\theta \Ncal|^2 \,,\\
    \Ebf(r) &:= \sum_{i=1}^Q \int_{\partial B_r}\langle \Ncal_i, \partial_\nu \Ncal_i \rangle\,.
\end{align*}
\begin{proposition}\label{p:variations}
    There exist $\gamma > 0$ and $C=C(q,n,\bar n)$ with the following property. Suppose that Assumption \ref{a:one-cm} holds. Then, the following hold for every $r\in \left(0,1\right]$.
    \begin{align}
        C^{-1} &\leq \Ibf(r) \leq C \label{e:unif_freq} \\
        \Hbf'(r) &= \frac{\Hbf(r)}{r} + 2\Ebf(r) \label{e:H'} \,, \\
        |\Dbf'(r) - 2\Gbf(r)| &\leq C r^{\gamma-1} \int_{\mathbf\Phi(B_r)}|DN|^2 \,, \label{e:inner-var-D'} \\
        |\Dbf(r) - \Ebf(r)| &\leq C r^\gamma \int_{\mathbf\Phi(B_r)} |DN|^2 \,.\label{e:outer-var-D}
    \end{align}
\end{proposition}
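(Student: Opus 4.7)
The plan is to derive \eqref{e:unif_freq}--\eqref{e:outer-var-D} by transferring the known variational identities for the $\Mcal$-normal approximation $N$ through the graph parametrization $\mathbf\Phi \colon B_{3/2} \to \Mcal$, paying careful attention to the error terms arising from the curvature of $\Mcal$ and from the deviation of $\mathbf\Phi$ from the identity. By the $C^{3,\kappa}$ center manifold estimates of \cite{DLHMS}, both $\mathbf\Phi$ and its Jacobian deviate from the identity by quantities of order $\mbf_0^{1/2} r^\gamma$ on $B_r$; this is the mechanism by which the error terms acquire the factor $r^\gamma$.

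I would first dispatch \eqref{e:H'}, which is a purely flat computation: writing $\Hbf(r) = \int_0^{2\pi} |\Ncal|^2(r,\theta)\, r\, d\theta$ in polar coordinates on $\pi_0$ and differentiating under the integral sign produces the two terms on the right-hand side, since $d\Hcal^1\mres \partial B_r = r\,d\theta$ and $\partial_\nu = \partial_r$. Next, I would establish \eqref{e:outer-var-D} by invoking the outer variation of $\Fcal$ along the normal field $\chi(|z|)\,\Ncal(z)$ with $\chi$ a cutoff approximating the indicator of $[0,r]$ (cf. \cite{DLSS3}*{Section 3}). The corresponding outer variation identity on $\Mcal$ is already proved in \cite{DLHMS}*{Theorem 17.24} with error controlled by integrals of the form $\int_{\mathbf\Phi(B_r)}|N||DN|^2 + |A_\Mcal|\,|N|^2|DN|$, which the height and Lipschitz estimates of \cite{DLHMS} in turn bound by $Cr^\gamma \int_{\mathbf\Phi(B_r)}|DN|^2$. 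Converting this from $\Mcal$ to $B_r$ through $\mathbf\Phi$ costs only an additional Jacobian factor $1 + O(r^\gamma)$, yielding \eqref{e:outer-var-D}.

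For \eqref{e:inner-var-D'}, I would begin from the polar-coordinate identity $\Dbf'(r) = \int_{\partial B_r}|\partial_\nu \Ncal|^2 + |\nabla_\theta \Ncal|^2$, which reduces the claim to the Pohozaev-type estimate
\[
\left|\int_{\partial B_r}\left(|\partial_\nu \Ncal|^2 - |\nabla_\theta\Ncal|^2\right)\right| \leq C\,r^{\gamma-1}\int_{\mathbf\Phi(B_r)}|DN|^2.
\]
This follows from the inner variation of $N$ on $\Mcal$, tested against a radial vector field supported in $\mathbf\Phi(B_r)$, and then transferred to $B_r$ through $\mathbf\Phi$; the extra factor $r^{-1}$ on the right is produced by the derivative of the radial cutoff at scale $r$. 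Finally, \eqref{e:unif_freq} is extracted from \eqref{e:H'}--\eqref{e:outer-var-D} in the standard Almgren fashion: combined together they deliver an almost-monotonicity inequality of the form $\frac{d}{dr}\log\max\{\Ibf(r),1\} \geq -C r^{\gamma-1}$, whose integration plus the initial bound $\Ibf(1)\leq C$ (coming from the excess/height estimates of $N$ at scale $1$) produces the upper bound. The lower bound follows from a blow-up contradiction: any subsequential tangent of $\Ncal$ at $0$ is a non-trivial radially homogeneous Dir-minimizer with $\boldsymbol\eta = 0$ and both positive and negative parts non-empty (since $\Theta(T,0)=Q$ and the origin is a flat singularity), hence by Theorem \ref{t:tangents} it is homogeneous of integer degree $\geq 2$.

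The main obstacle is the careful bookkeeping of the curvature-type error terms --- those coming from the second fundamental form of $\Mcal$, from the ambient map $\Psi$, and from the Jacobian of $\mathbf\Phi$ --- and verifying that each is absorbed into $Cr^\gamma$ times the Dirichlet energy of $N$ on $\mathbf\Phi(B_r)$. A secondary, more conceptual issue is reconciling the classical (non-regularized) frequency $\Ibf$ of $\Ncal$ with the regularized frequency of $N$ employed in \cite{DLHMS}: the variational estimates in Steps 2--3 are tailored precisely so that $\Ibf$ itself obeys the same form of identities, thereby enabling direct Fourier comparison on the circles $\partial B_r$ in Section \ref{s:decay}.
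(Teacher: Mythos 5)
Your treatment of \eqref{e:H'}, \eqref{e:inner-var-D'} and \eqref{e:outer-var-D} is essentially the paper's: the paper tests the first variation of $\Tbf_{\Fcal}$ with the outer field $X_o(x)=\phi(|\mathbf p_{\pi_0}(x)|/r)(x-\mathbf p_{\pi_0}(x))$ and a radial inner field $X_i$, Taylor-expands via \cite{DLHMS_linear}*{Theorems 14.2 and 14.3}, changes variables through $\mathbf\Phi$, and compares with $\delta T(Y_o)$, $\delta T(Y_i)$ to absorb all curvature and graphicality errors into $Cr^\gamma\int_{\mathbf\Phi(B_r)}|DN|^2$; your Pohozaev formulation of \eqref{e:inner-var-D'} is exactly the $\phi\to\mathbf{1}_{[0,1]}$ limit of the inner variation with $X_i$, so this is the same argument in different clothing. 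The genuine divergence is \eqref{e:unif_freq}: the paper obtains both bounds by a single contradiction/compactness argument (following \cite{Sk21}*{Proof of Theorem 6.8} and \cite{DLSk2}*{Proof of Lemma 4.1}), whereas you propose almost-monotonicity of $\log\max\{\Ibf,1\}$ integrated from the outer scale for the upper bound and a blow-up classification for the lower bound. Two caveats on your route. First, the bound $\Ibf(1)\leq C$ is not a direct consequence of the height and excess estimates for $N$: those control $\Dbf(1)$ from above but give no lower bound on $\Hbf(1)$, so one needs a separate non-degeneracy (splitting-type) estimate at the outer scale, which is precisely what the cited compactness argument supplies. Second, your lower bound presupposes the existence of a non-trivial Dir-minimizing blow-up of $\Ncal$ at $0$, but the compactness producing that blow-up is normally run only after the frequency is known to be bounded; as written, the argument is circular unless both bounds are established simultaneously by contradiction at a first bad scale, which is in effect what the paper's reference does. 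Neither issue is fatal, since both are resolved in the cited literature, but they should be flagged rather than attributed to the variational identities alone.
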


Before proving Proposition \ref{p:variations}, let us provide the consequence that the estimates therein have for $\Ibf$.

\begin{corollary}\label{c:freq-monotonicity}
    Suppose that Assumption \ref{a:one-cm} holds. There exists $C= C(q, n, \bar n)$ such that 
    \[
        \Ibf'(r) \geq -C r^{\gamma-1} \qquad \mbox{for every $r\in \left(0,1\right]$}\,,
    \]
    where $\gamma>0$ is as in Proposition \ref{p:variations}.
\end{corollary}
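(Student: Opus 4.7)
The plan is to logarithmically differentiate the frequency $\Ibf(r) = r\Dbf(r)/\Hbf(r)$ and use \eqref{e:H'} to eliminate $\Hbf'$. This yields the identity
\[
    \Ibf'(r) \;=\; \frac{r\bigl(\Dbf'(r)\Hbf(r) - 2\Dbf(r)\Ebf(r)\bigr)}{\Hbf(r)^2}\,,
\]
so the task reduces to bounding the numerator from below. Setting $\Ecal_r := \int_{\mathbf\Phi(B_r)}|DN|^2$, the fact that $\mathbf\Phi$ is a $C^{3,\kappa}$-graph parameterization over $\pi_0 \supset B_{3/2}$ whose Lipschitz constant is close to $1$ (by Assumption \ref{a:one-cm} and the smallness of $\mbf_0$) gives $\Ecal_r \sim \Dbf(r)$, with constants depending only on $n,\bar n,q$.

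Plugging \eqref{e:inner-var-D'} and \eqref{e:outer-var-D} into the numerator yields
\[
    \Dbf'\Hbf - 2\Dbf\Ebf \;\geq\; \bigl(2\Gbf\Hbf - 2\Ebf^2\bigr) - Cr^{\gamma-1}\Ecal_r\Hbf - 2Cr^\gamma \Ecal_r\,|\Ebf|\,.
\]
The delicate step is the lower bound for the main term $2\Gbf\Hbf - 2\Ebf^2$, which plays the role of the Cauchy--Schwarz identity underlying the classical Almgren monotonicity. Applying Cauchy--Schwarz componentwise to the definition of $\Ebf$ gives $\Ebf(r)^2 \leq \Hbf(r)\int_{\partial B_r}|\partial_\nu\Ncal|^2$; on the other hand, since $\Dbf'(r) = \int_{\partial B_r}|\partial_\nu\Ncal|^2 + \Gbf(r)$, a second application of \eqref{e:inner-var-D'} yields $\int_{\partial B_r}|\partial_\nu\Ncal|^2 \leq \Gbf(r) + Cr^{\gamma-1}\Ecal_r$. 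Combining, one gets
\[
    2\Gbf\Hbf - 2\Ebf^2 \;\geq\; -Cr^{\gamma-1}\Ecal_r\Hbf\,,
\]
which is of the same order as the other error terms in the numerator estimate.

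Collecting everything and using $|\Ebf| \leq \Dbf + Cr^\gamma \Ecal_r \leq C\Dbf$, one obtains
\[
    \Dbf'\Hbf - 2\Dbf\Ebf \;\geq\; -Cr^{\gamma-1}\Ecal_r\Hbf - Cr^\gamma \Dbf^2\,.
\]
Dividing through by $\Hbf^2/r$ and rewriting in terms of $\Ibf(r)$ (again using $\Ecal_r \sim \Dbf$) converts this into
\[
    \Ibf'(r) \;\geq\; -Cr^{\gamma-1}\,\Ibf(r)\,\bigl(1 + \Ibf(r)\bigr)\,,
\]
and the uniform upper bound \eqref{e:unif_freq} delivers the desired estimate $\Ibf'(r) \geq -Cr^{\gamma-1}$. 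The main technical point is that the Cauchy--Schwarz deficit must appear at the same order $r^{\gamma-1}$ as the inner- and outer-variation errors; this is precisely what the improved exponent $\gamma$ in \eqref{e:inner-var-D'} enables, since that deficit can only be recovered after substituting $\int|\partial_\nu\Ncal|^2 \approx \Gbf$, an identity which holds only up to the inner-variation error.
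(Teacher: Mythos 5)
Your argument is correct and is essentially the argument the paper invokes by reference (the standard Almgren computation from \cite{DLS16blowup}*{Proof of Theorem 3.2} with the simplification of \cite{DLSk2}*{Lemma 4.1, (31)}): differentiate $\Ibf$, eliminate $\Hbf'$ via \eqref{e:H'}, substitute the inner and outer variation estimates, and absorb the Cauchy--Schwarz deficit $\Gbf\Hbf-\Ebf^2$ at order $r^{\gamma-1}$ using $\int_{\partial B_r}|\partial_\nu\Ncal|^2=\Dbf'-\Gbf$. All steps, including the comparability $\int_{\mathbf\Phi(B_r)}|DN|^2\sim\Dbf(r)$ and the final use of \eqref{e:unif_freq}, check out.
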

We omit the proof of Corollary \ref{c:freq-monotonicity} here, since given the variational estimates of Proposition \ref{p:variations}, it follows by exactly the same reasoning as that in \cite{DLS16blowup}*{Proof of Theorem 3.2}, combined with the simplification of the frequency radial derivative estimate provided in \cite{DLSk2}*{Lemma 4.1, (31)}.

\begin{proof}[Proof of Proposition \ref{p:variations}]
First, the uniform upper and lower bounds in \eqref{e:unif_freq} are a consequence of a contradiction and compactness argument, cf. \cite{Sk21}*{Proof of Theorem 6.8} and \cite{DLSk2}*{Proof of Lemma 4.1}. Observe that these arguments work analogously in the mod$(q)$ setting, see for instance the discussion in \cite{DMS-modp}*{Section 11}. The identity in \eqref{e:H'} does not require the inner or outer variations for $\Tbf_\Fcal$, and is merely a consequence of directly differentiating $\Hbf(r)$; see for instance \cite{DLS_MAMS}*{Proof of Theorem 3.15}.

We now come to the estimates \eqref{e:outer-var-D} and \eqref{e:inner-var-D'}. These estimates are a consequence of the same computations as in \cite{DLS16blowup}*{Sections 3.1 \& 3.3} and \cite{DLHMS}*{Sections 26.1}, combined with a change of variables to reparameterize to $\pi_0$. Nevertheless, we provide the details here for clarity.

Fix a monotone non-increasing cutoff $\phi\in C^\infty([0,\infty)$ with $\phi \equiv 1$ on $[0,\tfrac{1}{2}]$ and $\phi \equiv 0$ on $[1,\infty)$. We now test the inner and outer variation for the mass of the current $\Tbf_{\Fcal}$ associated to the multigraph $z\mapsto \Fcal(z)$ with the respective vector fields
    \begin{align*}
        X_o(x) &:= \phi\left(\frac{|\mathbf{p}_{\pi_0}(x)|}{r}\right)(x-\mathbf{p}_{\pi_0}(x))\,, \\
        X_i(x) &:= W(\mathbf{p}_{\pi_0}(x)) \,,
    \end{align*}
 with 
 \[
    W(z) :=  \frac{|z|}{r} \phi\left(\frac{|z|}{r}\right) \frac{z}{|z|} \,.
 \]

Let us begin with the outer variation estimate. In light of \cite{DLHMS_linear}*{Theorem 14.2},\footnote{We are applying this result with the domain contained in a plane, thus simplfying the error terms greatly.} we have
    \begin{align*}
        \delta \mathbf{T}_{\Fcal}(X_o) &= \int_{\pi_0} \phi \left(\frac{|z|}{r}\right) |D\Ncal|^2 \\
        &\qquad+ \frac{1}{r}\int_{\pi_0} \sum_j \Ncal_j \otimes \phi' \left(\frac{|z|}{r}\right) \frac{z}{|z|} : D\Ncal_j + O\left(\int_{B_r} |D\Ncal|^3\right)\,.
    \end{align*}
    It therefore remains to control $\delta\mathbf{T}_{\Fcal}(X_o)$. Changing variables, we have 
    \begin{align*}
        \int_{\pi_0} \phi \left(\frac{|z|}{r}\right) |D\Ncal|^2 &= \int_{\Mcal} \phi \left(\frac{|\mathbf\Phi^{-1}(z)|}{r}\right) |DN|^2 |D \mathbf\Phi(\mathbf\Phi^{-1}(z))|^2 \Jbf_{\mathbf\Phi^{-1}}(z) \\
    &= \int_{\Mcal}\phi \left(\frac{|\mathbf\Phi^{-1}(z)|}{r}\right) |DN|^2 + O\left(\mbf_0^{1/2} r \int_{\mathbf\Phi(B_r)} |DN|^2 \right) \,,
    \end{align*}
    and
    \begin{align*}
        \frac{1}{r}\int_{\pi_0} &\sum_j \Ncal_j \otimes \phi' \left(\frac{|z|}{r}\right) \frac{z}{|z|} : D\Ncal_j \\
        &= \frac{1}{r}\int_{\Mcal} \sum_j N_j \otimes \phi' \left(\frac{|\mathbf\Phi^{-1}(z)|}{r}\right) \frac{\mathbf\Phi^{-1}(z)}{|\mathbf\Phi^{-1}(z)|} : DN_j(z) D\mathbf\Phi(\mathbf\Phi^{-1}(z)) \Jbf_{\mathbf\Phi^{-1}}(z) \\
        &= \frac{1}{r} \int_{\Mcal} \phi' \left(\frac{|\mathbf\Phi^{-1}(z)|}{r}\right) \sum_j \left\langle N_j, DN_j \cdot \frac{\mathbf\Phi^{-1}(z)}{|\mathbf\Phi^{-1}(z)|} \right\rangle {+ O\left(\mbf_0 \int_{\mathbf\Phi(B_r)} |N||DN| \right)} \,.
    \end{align*}
    Thus, 
    comparing to $\delta T(Y_o)$ with
    \begin{equation}\label{e:Y_o}
        Y_o(x) = \phi\left(\frac{|\mathbf\Phi^{-1}(\mathbf{p}(x))|}{r}\right)(x-\mathbf{p}(x))\,,
    \end{equation}
    and following the reasoning in \cite{DLS16blowup}*{Sections 3.3 \& 4} and \cite{DLHMS}*{Sections 26.1 \& 26.2}, combined with the simplification of the estimates in \cite{DLSk2}*{Lemma 4.1, (29)},\footnote{Observe that this remains unchanged in the mod$(q)$ setting.} we have 
    \begin{align*}
        |\delta\mathbf{T}_{\Fcal}(X_o)| &\leq \sum_{j=1}^5 |\Err^o_j|  + C \mbf_0^{1/2} r \int_{\mathbf\Phi(B_r)} |DN|^2 + C\mbf_0 \int_{\mathbf\Phi(B_r)} |N||DN| + C\int_{B_r} |D\Ncal|^3\\
        &\leq C r^\gamma \int_{\mathbf\Phi(B_r)} |DN|^2 \,,
    \end{align*}
for some $\gamma>0$, where $\Err_j^o$ are as in \cite{DLHMS}*{(26.9), (26.11)-(26.13)} (cf. \cite{DLS16blowup}*{(3.19), (3.21)-(3.23)}, but with the slightly amended definition of cutoff given in \eqref{e:Y_o}. Notice that in the last estimate we are using \cite{DLS16blowup}*{(3.18)}, namely 
    \begin{equation}\label{e:Poincare-0-avg}
        \int_{\Bcal_r} |N|^2 \leq C r^2 \int_{\Bcal_r} |DN|^2\,,
    \end{equation}
    which remains valid in the mod$(q)$ setting (cf. \cite{DLHMS}*{Proposition 26.4} and the discussion thereafter). Taking a family of such functions $\phi$ approximating $\mathbf{1}_{[0,1]}$ from below and applying the Dominated Convergence Theorem, we conclude \eqref{e:outer-var-D}.

Now let us handle the inner variation estimate. For this, we argue analogously to above, only now we apply \cite{DLHMS_linear}*{Theorem 14.3}. This yields
\begin{align*}
    \delta \Tbf_{\Fcal}(X_i) &= \frac{1}{2} \int_{\pi_0} |D\Ncal|^2 \diverg_{\pi_0} W - \int_{\pi_0} \sum_{j=1}^Q \langle D\Ncal_j : D\Ncal_j \cdot D_{\pi_0} W \rangle \\
    &\qquad+ O\left( \int_{B_r} |D\Ncal|^3\right)\,.
\end{align*}
Changing variables as above, we have 
\begin{align*}
    \frac{1}{2} \int_{\pi_0} |D\Ncal|^2 \diverg_{\pi_0} W &= \frac{1}{2} \int_{\pi_0} |DN(\mathbf\Phi(z))|^2 |D\mathbf\Phi(z)|^2 \diverg_{\pi_0} W(z) \\
    &= \frac{1}{2} \int_{\Mcal} |DN(z)|^2  \diverg_{\pi_0} W(\mathbf\Phi^{-1}(z)) + O\left( \mbf_0^{1/2}\int_{\mathbf\Phi(B_r)} |DN|^2 \right)\,.
\end{align*}
Similarly,
\begin{align*}
    \int_{\pi_0} \sum_{j=1}^Q &\langle D\Ncal_j : D\Ncal_j \cdot D_{\pi_0} W \rangle \\
    &= \int_{\pi_0} \sum_{j=1}^Q \langle DN_j(\mathbf\Phi(z))D\mathbf\Phi(z) : \left( DN_j(\mathbf\Phi(z)) D\mathbf\Phi(z) \cdot D_{\pi_0} W(z) \right) \rangle \\
    &= \int_{\Mcal} \sum_{j=1}^Q \langle DN_j(z) : DN_j(z) \cdot D_{\pi_0} W(\mathbf\Phi^{-1}(z)) \rangle + O\left( \mbf_0^{1/2}\int_{\mathbf\Phi(B_r)} |DN|^2 \right)\,.
\end{align*}
Thus, proceeding as in the outer variation case and comparing $\delta \Tbf_\Fcal(X_i)$ to $\delta T(Y_i)$ for
\begin{equation}\label{e:Y_i}
    Y_i(x) = \frac{|\mathbf\Phi^{-1}(\mathbf{p}(x))|}{r} \phi\left(\frac{|\mathbf\Phi^{-1}(\mathbf{p}(x))|}{r}\right) \frac{\mathbf\Phi^{-1}(\mathbf{p}(x))}{|\mathbf\Phi^{-1}(\mathbf{p}(x))|} \,,
\end{equation}
combined with the simplified estimate in \cite{DLSk2}*{Lemma 4.1, (30)}, we arrive at
\begin{align*}
    |\delta \Tbf_\Fcal(X_i)| &\leq \sum_{j=1}^5 |\Err_j^i| + C \mbf_0^{1/2}\int_{\mathbf\Phi(B_r)} |DN|^2 + C \int_{B_r} |D\Ncal|^3 \\
    &\leq C r^{\gamma-1} \int_{\mathbf\Phi(B_r)}|DN|^2
\end{align*}
for $\gamma>0$ as above, where $\Err_1^i, \Err_2^i, \Err_3^i$ are as in \cite{DLHMS}*{(26.16)-(26.18)} (cf. \cite{DLS16blowup}*{(3.26)-(3.28)}) but with the slightly amended cutoff given in \eqref{e:Y_i}, while $\Err_4^i, \Err_5^i$ are the same as $\Err_4^o, \Err_5^o$ with $X_i$ in place of $X_o$. Once again taking $\phi$ to approximate $\mathbf{1}_{[0,1]}$ from below and applying the Dominated Convergence Theorem, we arrive at \eqref{e:inner-var-D'}.
\end{proof}

\subsection{Competitors and almost Dir-minimality}
Next, we give the definition of Lipschitz competitors for $\Ncal$. 

\begin{definition}
	Given $r\in (0,\tfrac{3}{2})$, we refer to a special $Q$-valued Lipschitz map $\Lcal : B_{r}\to \Ascr_Q(\R^{2+n})$ as a competitor for $\Ncal$ in $B_r$ if
	\begin{itemize}
		\item $\Lcal|_{\partial B_r} = \Ncal|_{\partial B_r}$, and the graph
		\item $\Gcal(z):= \left(\sum_i \llbracket\mathbf\Phi(z) + \Lcal_i(z)\rrbracket, \epsilon(z)\right)$, with $\epsilon(z)=\pm 1$ on $\{|\Lcal^\pm \ominus \boldsymbol\eta \circ\Lcal|>0\}$, is supported in $\Sigma$ for each $z\in B_r\,.$
	\end{itemize}
\end{definition}

Let $\mathbf{p}_0$ denote the orthogonal projection of $\R^{2+n}$ onto $T_0\Sigma=\R^{2+\bar n} \times \{0\} \subset \R^{2+\bar n} \times \R^l$, and consider the Lipschitz special $Q$-valued map $\bar\Ncal :=\mathbf{p}_0 \circ \Ncal$. Notice that it is possible to determine $\Ncal$ from $\bar\Ncal$. Indeed, since $\mathbf{\Phi}(z) + \Ncal_i^\pm(z)\in\Sigma$ for every $z$ and every $i$, we have
\[
\Ncal_i^\pm(z) = \mathbf{p}_0\circ\mathbf{\Phi}(z) + \bar\Ncal_i^\pm(z) + \Psi(\mathbf{p}_0\circ\mathbf{\Phi}(z) + \bar\Ncal_i^\pm(z)) - \mathbf{\Phi}(z)\,.
\]
In the splitting $\R^{2+n}= T_0\Sigma \oplus T_0\Sigma^\perp \simeq \R^{2+\bar n} \times \R^l$, we then have
\begin{equation} \label{e:N-decomp}
\Ncal_i^{\pm}(z) = \left(\bar\Ncal_i^\pm(z),\Psi(\mathbf{p}_0\circ\mathbf{\Phi}(z) + \bar\Ncal_i^\pm(z)) - \Psi(\mathbf{p}_0\circ\mathbf{\Phi}(z))\right)\,.
\end{equation}
Analogously, for a given Lipschitz competitor $\Lcal$ for $\Ncal$ in $B_r$, upon setting $\bar\Lcal := \mathbf p_0 \circ \Lcal$ one obtains that 
\begin{equation} \label{e:L-decomp}
\Lcal_i^{\pm}(z) = \left(\bar\Lcal_i^\pm(z),\Psi(\mathbf{p}_0\circ\mathbf{\Phi}(z) + \bar\Lcal_i^\pm(z)) - \Psi(\mathbf{p}_0\circ\mathbf{\Phi}(z))\right)\,.
\end{equation}
In particular, a Lipschitz competitor for $\Ncal$ in $B_r$ can be always obtained by defining a Lipschitz map $\bar \Lcal \colon B_r \to \Ascr_Q(\R^{2+\bar n})$ such that $\left.\bar\Lcal\right|_{\partial B_r} = \left. \bar\Ncal\right|_{\partial B_r}$ and then using \eqref{e:L-decomp} to lift it to a map $\Lcal$ for which the corresponding graph $\Gcal$ is automatically supported on $\Sigma$.

\medskip

We are now in a position to state the almost Dir-minimizing property for $\Ncal$ relative to Lipschitz competitors. In the following statement, we will use the notation $L:= \Lcal\circ\mathbf\Phi^{-1}$.

\begin{proposition}\label{p:almost-min}
    Suppose that Assumption \ref{a:one-cm} holds. There exists constants $C=C(n, Q)>0$ and $\gamma=\gamma(n,Q)\in (0,1)$ such that the following holds. Suppose that $r\in (0,\tfrac{3}{2})$ and that $\Lcal$ is a competitor for $\Ncal$. Then
    \begin{equation}\label{e:competitor-energy-comparison}
        \Dbf(r) \leq (1+Cr)\int_{B_r} |D\bar \Lcal|^2 + C(\Dbf(r)^\gamma + r)\Dbf(r) + Cr\int_{\mathbf\Phi(B_r)} |\boldsymbol\eta\circ L|\,.
    \end{equation}
\end{proposition}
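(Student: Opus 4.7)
The plan is to use the mod$(q)$ area-minimality of $T$ against a competitor current built from $\Tbf_\Gcal$, and then extract \eqref{e:competitor-energy-comparison} from the resulting mass inequality via a Taylor expansion for the area of special $Q$-valued graphs, combined with a change of variables from $\Mcal$ to $\pi_0$ and the reduction from $\Lcal$ to $\bar\Lcal$. The argument is in the spirit of \cite{DLHMS}*{Section 26} and \cite{DLSS3}*{Section 3}. First I would construct a mod$(q)$ competitor $Z$ for $T$ supported in $\Sigma$, by interpolating between $\Tbf_\Gcal$ on $\mathbf p^{-1}(\mathbf\Phi(B_r))$ and $T$ on the outer boundary of a thin shell $\mathbf p^{-1}(\mathbf\Phi(B_{r+\sigma})\setminus\mathbf\Phi(B_r))$. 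Since $\Lcal|_{\partial B_r}=\Ncal|_{\partial B_r}$ and $T$ coincides with $\Tbf_\Fcal$ on the contact set $\mathbf p^{-1}(\Kcal)$, the interpolation only needs to correct the discrepancy on the error set $\Mcal\setminus\Kcal$, whose mass is controlled by a power of $\mbf_0$ times $r^2$ via \cite{DLHMS}*{Theorem 17.24, Corollary 17.25}. The resulting $Z$ is admissible, and minimality yields $\Mbf^q(T)\leq \Mbf^q(Z)$ on a sufficiently large ball containing $\mathbf\Phi(B_r)$.

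Next, using the area formula for special $Q$-valued graphs from \cite{DLHMS_linear}*{Theorem 14.2}, I would expand
\[
\Mbf^q(\Tbf_\Fcal\mres\mathbf p^{-1}(\mathbf\Phi(B_r))) = Q\,\Hcal^2(\mathbf\Phi(B_r)) + \tfrac{1}{2}\int_{\mathbf\Phi(B_r)}|DN|^2 + \sum_i\int_{\mathbf\Phi(B_r)}\langle \mathbf H_\Mcal,N_i\rangle + \text{h.o.t.},
\]
with the analogous identity for $\Tbf_\Gcal$ in terms of $L$. Subtracting the two expansions and using $\sum_i N_i = Q\,\boldsymbol\eta\circ N$, $\sum_i L_i = Q\,\boldsymbol\eta\circ L$, together with the center-manifold estimate $\|\mathbf H_\Mcal\|_{C^0(\mathbf\Phi(B_r))}\leq Cr$ (which follows from $\mathbf H_\Mcal(0)=0$ and the $C^{2,\kappa}$-smoothness of $\Mcal$), produces
\[
\int_{\mathbf\Phi(B_r)}|DN|^2 \leq \int_{\mathbf\Phi(B_r)}|DL|^2 + Cr\int_{\mathbf\Phi(B_r)}|\boldsymbol\eta\circ L| + \text{errors}.
\]
The $|\boldsymbol\eta\circ N|$ contributions can be absorbed via Poincaré \eqref{e:Poincare-0-avg}, while the cubic remainders and the contribution from the error set fold into $C\Dbf(r)^{1+\gamma}$ using the $L^\infty$ bound on $DN$ from \cite{DLHMS}.

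Finally, changing variables via $\mathbf\Phi$, with $|J_{\mathbf\Phi^{-1}}-1|,\,|D\mathbf\Phi-\Id|\leq Cr$ on $B_r$, one obtains $\int_{\mathbf\Phi(B_r)}|DN|^2=(1+O(r))\Dbf(r)$ and $\int_{\mathbf\Phi(B_r)}|DL|^2=(1+O(r))\int_{B_r}|D\Lcal|^2$. Then the graph identity \eqref{e:L-decomp} with the bounds $\|D\Psi\|_{C^0(B_r)}\leq Cr$ and $\|D^2\Psi\|_{C^0}\leq C$ gives $|D\Lcal|^2=(1+O(r))|D\bar\Lcal|^2$, and analogously for $\Ncal$; combining these estimates yields \eqref{e:competitor-energy-comparison}. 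The chief difficulty lies in the first step: fabricating the shell interpolation in a manner that preserves the mod$(q)$ structure (the orientation function $\epsilon$ and the boundary mod$(q)$), stays within $\Sigma$, and still produces only the stated error terms. A secondary bookkeeping issue is to show that all superquadratic remainders assembled along the argument combine into a single term of the desired form $C(\Dbf(r)^\gamma+r)\Dbf(r)$ for a universal $\gamma>0$.
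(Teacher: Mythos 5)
Your overall strategy --- test mod$(q)$ minimality against a graph-replacement competitor built from $\Gcal$, Taylor-expand the two masses, change variables via $\mathbf\Phi$, and finally reduce from $\Lcal$ to $\bar\Lcal$ via \eqref{e:L-decomp} --- is the same as the paper's, and the final reduction step and the treatment of the $\langle\boldsymbol\eta\circ L, H_\Mcal\rangle$ term match what is actually done. However, there are two genuine gaps.

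First, the shell interpolation you identify as ``the chief difficulty'' is both unresolved and unnecessary. Because $\Lcal|_{\partial B_r}=\Ncal|_{\partial B_r}$, the currents $\Tbf_{\Fcal|_{B_r}}$ and $\Tbf_{\Gcal}$ have the same boundary in $\mathbf p^{-1}(\mathbf\Phi(B_r))$, so $T\mres\mathbf p^{-1}(\mathbf\Phi(B_r))-\Tbf_{\Fcal|_{B_r}}+\Tbf_{\Gcal}$ is \emph{directly} an admissible mod$(q)$ competitor for $T$: no correction over a thin shell, and no use of the contact set, is needed for admissibility. The discrepancy between $T$ and $\Tbf_\Fcal$ off $\Kcal$ enters only through the mass, via the triangle inequality $\Mbf(T-\Tbf_{\Fcal|_{B_r}}+\Tbf_\Gcal)\leq\|T-\Tbf_{\Fcal|_{B_r}}\|(\mathbf p^{-1}(\mathbf\Phi(B_r)))+\Mbf(\Tbf_\Gcal)$, and that first term is exactly the graphicality error $C\bigl[\int_{\mathbf\Phi(B_r)}|DN|^2\bigr]^{1+\gamma}$, which is absorbed into $C\Dbf(r)^{\gamma}\Dbf(r)$. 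Your route, if pushed through, would have to verify that the interpolant preserves the mod$(q)$ boundary condition and stays in $\Sigma$ --- precisely the issues you flag --- whereas the direct cut-and-paste sidesteps them entirely.

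Second, you cannot expand $\Mbf(\Tbf_\Gcal)$ with ``h.o.t.'' in $DL$ and fold those remainders into $C\Dbf(r)^{1+\gamma}$: $\Lcal$ is an \emph{arbitrary} Lipschitz competitor with no quantitative Lipschitz bound, so terms like $\int|DL|^4$ are not controlled by any power of $\Dbf(r)$. The correct move is a one-sided expansion with no superquadratic remainder in $DL$: bound the two-dimensional Jacobian pointwise by $\tfrac12\sum_{j=1,2}|e_j+DL_i\cdot e_j|^2$, which gives $\Mbf(\Tbf_\Gcal)\leq Q\Hcal^2(\mathbf\Phi(B_r))+\tfrac12\int_{\mathbf\Phi(B_r)}|DL|^2-Q\int_{\mathbf\Phi(B_r)}\langle\boldsymbol\eta\circ L,H_\Mcal\rangle$ exactly, with the divergence term integrating out by the boundary matching. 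Only the expansion of $\Mbf(\Tbf_{\Fcal|_{B_r}})$ from below may carry quartic terms in $DN$, for which the Lipschitz and $L^\infty$ bounds on the normal approximation are available. With these two corrections the rest of your outline goes through.
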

\begin{proof}
First of all, let us verify that it suffices to demonstrate \eqref{e:competitor-energy-comparison} with $\Lcal$ in place of $\bar \Lcal$. Indeed, by the proof of \cite{DLSS3}*{Lemma A.2},\footnote{Note that the validity of the argument therein, excluding the very last step where Lemma A.1 is applied, is relying merely on the regularity of the parameterizing map $\Psi$ for the ambient manifold $\Sigma$ and thus remains valid in the mod$(q)$ setting.} we have
\[
    \int_{B_r} |D\Lcal|^2 \leq (1+Cr)\int_{B_r} |D\bar \Lcal|^2 + C\int_{B_r} |\bar \Lcal|^2\,.
\]
Since $\bar\Lcal$ is Lipschitz, by a combination of the Fundamental Theorem of Calculus and Fubini's Theorem, we obtain
\[
    \int_{B_r} |\bar \Lcal|^2 \leq Cr^2 \int_{B_r} |D\bar \Lcal|^2 + Cr \int_{\partial B_r} |\bar \Lcal|^2\,.
\]
Recalling that $\bar \Lcal$ shares the values of $\bar \Ncal$ on $\partial B_r$ and combining with the lower bound on the frequency from Corollary \ref{c:freq-monotonicity}, the estimate \eqref{e:competitor-energy-comparison} follows.

    Due to the fact that $\Ncal |_{\partial B_r} = \Lcal |_{\partial B_r}$, the current $T\mres \mathbf{p}^{-1}(\mathbf\Phi(B_r)) - \Tbf_{\Fcal |_{B_r}} + \Tbf_{\Gcal}$ is an admissible competitor for $T$ in $\mathbf{p}^{-1}(\mathbf\Phi(B_r))$ and thus we have
\begin{align*}
    \Mbf(T\mres \mathbf{p}^{-1}(\mathbf\Phi(B_r)) &\leq \Mbf(T\mres \mathbf{p}^{-1}(\mathbf\Phi(B_r)) - \Tbf_{\Fcal |_{B_r}} + \Tbf_{\Gcal}) \\
    &\leq \|T-\Tbf_{\Fcal|_{B_r}}\|(\mathbf{p}^{-1}(\mathbf\Phi(B_r))) + \Mbf(\Tbf_{\Gcal})\,,
\end{align*}
which in turn implies
\begin{equation}\label{e:competitor-area}
    \Mbf(\Tbf_{\Fcal|_{B_r}}) \leq  2\|T-\Tbf_{\Fcal|_{B_r}}\|(\mathbf{p}^{-1}(\mathbf\Phi(B_r))) + \Mbf(\Tbf_{\Gcal})\,.
\end{equation}
We begin by estimating $\|T-\Tbf_{\Fcal|_{B_r}}\|(\mathbf{p}^{-1}(\mathbf\Phi(B_r)))$. First of all, recall that by the regularity properties of $\vphi$ (and hence $\mathbf\Phi$) from \cite{DLHMS}*{Theorem 17.19}, the Jacobian $\Jbf_{\mathbf\Phi}$ of $\mathbf\Phi$ satisfies 
\begin{equation}\label{e:Jacobian}
    \Jbf_{\mathbf\Phi}\leq  1 + C\mbf_0^{1/2}r \qquad \text{on $\mathbf\Phi^{-1}(\Bcal_r)$.}
\end{equation}
We may now perform a decomposition into Whitney regions and use \cite{DLS16blowup}*{Lemma 4.5}, which we observe remains valid in the mod$(q)$ framework in light of the estimates in \cite{DLHMS}, together with \cite{DLHMS}*{(26.7)} and \cite{DLSk2}*{(26)}, as well as \eqref{e:Jacobian} and a change of variables, to obtain
\begin{equation}\label{e:error-from-graphicality}
    \|T-\Tbf_{\Fcal|_{B_r}}\|(\mathbf{p}^{-1}(\mathbf\Phi(B_r))) \leq C\left[\int_{\mathbf{\Phi}(B_r)}|DN|^2\right]^{1+\gamma} \leq C\left[\int_{B_r} |D\Ncal|^2\right]^{1+\gamma}\,,
\end{equation}
for a suitable geometric constant $\gamma=\gamma(n,Q) >0$.

Now, we may perform a curvilinear Taylor expansion of the area using \cite{DLHMS_linear}*{Theorem 13.1}:
\begin{align*}
    \int_{\mathbf\Phi (B_r)}|DN|^2 &\leq 2\Mbf(\Tbf_{\Fcal |_{B_r}}) -2Q\Hcal^m(\mathbf\Phi(B_r)) + C\int_{\mathbf\Phi(B_r)}(|A_\Mcal|^2|N|^2 + |DN|^4) \\
    &\qquad+ 2Q\int_{\mathbf\Phi(B_r)} \langle \boldsymbol\eta \circ N, H_\Mcal\rangle\,.
\end{align*}
Combining the above estimate with \eqref{e:competitor-area}, \eqref{e:error-from-graphicality} and another change of variables, we have
\begin{align*}
    \Dbf(r) &= \int_{B_r} |DN(\mathbf\Phi(z))|^2 |D\mathbf\Phi(z)|^2 \\
    &= \int_{\mathbf\Phi(B_r)}|DN(x)|^2 |D\mathbf\Phi(\mathbf\Phi^{-1}(x))|^2 \Jbf_{\mathbf\Phi^{-1}}(x) \\
    &\leq \int_{\mathbf\Phi(B_r)}|DN|^2 + C\mbf_0^{1/2} r \Dbf(r) \\
    &\leq C(\Dbf(r)^\gamma + r)\Dbf(r) +2\Mbf(\Tbf_{\Gcal|_{B_r}}) -2Q\Hcal^m(\mathbf\Phi(B_r)) \\
    &\qquad+ C\int_{\mathbf\Phi(B_r)}(|A_\Mcal|^2|N|^2 + |DN|^4) + 2Q\int_{\mathbf\Phi(B_r)} \langle \boldsymbol\eta \circ N, H_\Mcal\rangle\,.
\end{align*}
Let us now Taylor expand $\Tbf_{\Gcal}$. This time, we wish to be more careful with the error terms, since $\Lcal$ is a general competitor. Let $e_1,e_2$ be an orthonormal frame on $\Mcal$
\begin{align*}
    \Mbf(\Tbf_{\Gcal}) &\leq \frac{1}{2} \sum_{i=1}^Q\int_{\mathbf\Phi(B_r)} |e_1 + DL_i \cdot e_1|^2 + |e_2 + DL_i \cdot e_2|^2 \\
    &= Q\Hcal^m(\mathbf\Phi(B_r)) + \frac{1}{2} \int_{\mathbf\Phi(B_r)} |DL|^2 + \sum_{i=1}^Q\sum_{j=1}^2 D_{e_j}(\langle L_i, e_j\rangle) - \sum_{i=1}^Q\sum_{j=1}^2 \langle L_i, D_{e_j} e_j\rangle \\
    &= Q\Hcal^m(\mathbf\Phi(B_r)) + \frac{1}{2} \int_{\mathbf\Phi(B_r)} |DL|^2 - Q\int_{\mathbf\Phi(B_r)} \langle \boldsymbol\eta \circ L, H_\Mcal\rangle \,.
\end{align*}
Inserting this into the preceding estimate, and again using a change of variables, we deduce that
\begin{align}
    \Dbf(r) &\leq \int_{\mathbf\Phi(B_r)} |DL|^2 + C(\Dbf(r)^\gamma + r)\Dbf(r) + C\int_{\mathbf\Phi(B_r)}(|A_\Mcal|^2|N|^2 + |DN|^4) \label{e:final-energy-comparison} \\
    &\qquad + 2Q\int_{\mathbf\Phi(B_r)} \langle \boldsymbol\eta \circ N - \boldsymbol\eta\circ L, H_\Mcal\rangle \notag \\
    &\leq (1+Cr)\int_{B_r} |D\Lcal|^2 + C(\Dbf(r)^\gamma + r)\Dbf(r) + C\int_{\mathbf\Phi(B_r)}(|A_\Mcal|^2|N|^2 + |DN|^4) \notag \\
    &\qquad + 2Q\int_{\mathbf\Phi(B_r)} \langle \boldsymbol\eta \circ N - \boldsymbol\eta\circ L, H_\Mcal\rangle\,.\notag
\end{align}
First of all, \cite{DLHMS}*{Corollary 17.25} and \cite{DLS16blowup}*{Lemma 4.5} tell us that, up to decreasing $\gamma$,
\begin{equation}\label{e:error-higher-order-Dir}
    \int_{\mathbf\Phi(B_r)}|DN|^4 \leq C \Dbf(r)^{1+\gamma}
\end{equation}

Furthermore, in light of \eqref{e:Poincare-0-avg} combined with the uniform lower bound on $\Ibf$ given by Corollary \ref{c:freq-monotonicity}, we have
\begin{equation}\label{e:cm-curvature-error}
    \int_{\mathbf\Phi(B_r)} |A_\Mcal|^2|N|^2 \leq C \int_\Mcal \phi\left(\frac{d(y)}{C_0 r}\right) |DN|^2 \leq C \Dbf(r)\,,
\end{equation}
for some $C, C_0 >0$, where $d(y)$ denotes the geodesic distance between $y$ and $\mathbf{\Phi}(0)$ on $\Mcal$.

It thus remains to estimate the average of the sheets of $N$. To that aim, we may simply exploit the estimates \cite{DLHMS}*{(17.25), (17.34)} and \cite{DLS16blowup}*{Lemma 4.5}, together with another application of \cite{DLHMS}*{Theorem 17.19} which in particular gives control on $H_\Mcal$, to conclude that
\begin{equation}\label{e:avg-N}
    \int_{\mathbf\Phi(B_r)} |\langle \boldsymbol\eta\circ N, H_\Mcal\rangle| \leq Cr\int_{\mathbf\Phi(B_r)} |\boldsymbol\eta\circ N| \leq Cr\Dbf(r)^{1+\gamma}\,,
\end{equation}
where we once again decrease $\gamma$ to a smaller geometric constant if necessary. Collecting together \eqref{e:final-energy-comparison} and the error estimates \eqref{e:cm-curvature-error}-\eqref{e:avg-N}, the desired bound \eqref{e:competitor-energy-comparison} follows immediately.
\end{proof}

\section{Choice of competitor and decay of the frequency}\label{s:decay}
We are now in a position to introduce a suitable competitor for $\Ncal$. We begin by setting up some notation. Let $\Omega_{j}^\pm$, $j\in\N$, denote the connected components, ordered from largest to smallest size, of the sets $\Omega^\pm:= \{|\Ncal^\pm| > 0\}\subset B_{3/2}$, where $\Ncal^\pm$ are defined as in \cite{DLHMSS}.  Let
\[
    \alpha := \Ibf(0) = \lim_{r\downarrow 0} \Ibf(r)\,,
\]
and for $j=1,\dots,\alpha$, let $\Ncal^{j,\pm}_1,  \dots , \Ncal^{j,\pm}_Q$ be the sheets of $\Ncal$ in $\Omega_j^\pm$.\footnote{When $\bar n = 1$, the sheets may be ordered, but we do not require this information.}

Consider the rescalings
\[
    \Ncal_r(x) := \frac{(N\circ \mathbf\Phi)(rx)}{\Dbf(r)^{1/2}}\,.
\]

In this section, we will work under the following assumption, which will later be ensured, up to rotating, relabelling indices and rescaling, by \cite{DLHMSS-fine-structure}*{Theorem 3.6, Proposition 2.8} applied to the rescalings $\Ncal_r$, together with a pinching hypothesis on the frequency function close to its value at zero scale.\footnote{Note that in our setting, $\mathbf\Phi$ can be identified with the exponential map for $\Mcal$ centered at 0 in the obvious way.}

\begin{assumption}\label{a:close-to-tangent}
Let $\eta>0$ and $r>0$. Suppose that for $j=1,\dots, \alpha$ and for $F_j^\pm := \Omega^\pm_j\cap\partial B_{r}$ we have
\begin{align*}
   &\partial B_r\cap \left\{(2j-2)\frac{\pi}{\alpha} + \eta < \theta < (2j-1)\frac{\pi}{\alpha} - \eta\right\} \subset F_{j}^+\,, \\
	&F_j^+ \subset \partial B_r\cap\left\{(2j-2)\frac{\pi}{\alpha} - \eta < \theta < (2j-1)\frac{\pi}{\alpha} + \eta\right\}\,,
\end{align*}
and
\begin{align*}
  &\partial B_r\cap\left\{(2j-1)\frac{\pi}{\alpha} + \eta < \theta < 2j\frac{\pi}{\alpha} - \eta \right\} \subset  F_{j}^-\,, \\
  &F_j^-\subset \partial B_r\cap\left\{(2j-1)\frac{\pi}{\alpha} - \eta < \theta < 2j\frac{\pi}{\alpha} + \eta \right\}\,,
\end{align*}
where $\theta \in [0, 2\pi)$ denotes the angular variable in $\pi_0$. Meanwhile, the other connected components $\Omega_j^\pm$, $j\geq \alpha + 1$, of $\Omega^\pm$ satisfy the property
\[
    \Omega_{j}^+ \cap \bar{B}_{r} \subset \bigcup_{k=1}^{2\alpha}\left\{x: \dist(x, L_k) < c{r}\eta \right\},
\]
where $L_k$ is the half-line $\{\theta = \frac{k\pi}{\alpha}\}$ and $c$ is a suitable dimensional constant.
\end{assumption}

Under Assumption \ref{a:close-to-tangent}, we have the following properties for the Laplace eigenvalues on $r^{-1}F_j^\pm \subset \partial B_1$. For each $j$, let $\{\vphi_{k}^{j,\pm}\}_{k\geq 1}$ denote an orthonormal basis of Dirichlet eigenfunctions for the Laplace-Beltrami operator on $r^{-1}F_{j}^\pm$, with corresponding eigenvalues $\lambda_{k}^{j,\pm}$, ordered such that $\lambda_{k}^{j,\pm} \leq \lambda_{k+1}^{j,\pm}$. In particular, there is a unique, non-negative, unit-length eigenfunction $\vphi_{1}^{j,\pm}$ with eigenvalue $\lambda_1^{j,\pm}$. Moreover, for any $\varsigma >0$, there exists $\eta>0$ sufficiently small such that we have
\begin{itemize}
    \item[(i)] $|\lambda_1^{j,
    \pm} - \alpha^2| \leq \varsigma$ \quad for $j=1,\dots,\alpha$,
    \item[(ii)] $\lambda_k^{j,\pm} \geq (\alpha+1)^2 - \varsigma$ \quad for $k\geq 2$ and $j=1,\dots,\alpha$,
    \item[(iii)] $\lambda_k^{j,\pm} \geq {\varsigma^{-1}}$ \quad for $j\geq \alpha +1$.
\end{itemize}
Let $U_{j}^\pm$ denote the cones
\[
    U_{j}^\pm := \{t F_{j}^\pm : t\in (0,1]\}.
\]

\subsection{Construction of competitor}
We now proceed to construct a competitor $\Lcal$. Recalling \eqref{e:L-decomp}, we may do this by constructing $\bar \Lcal$ and then lifting to the desired map $\Lcal$ whose graph takes values in $\Sigma$. We begin by constructing it in the annulus $B_r\setminus B_{\sigma r}$ for a (small) fixed parameter $\sigma\in(0,1)$ to be determined. We start with the portions of the annulus intersecting the {small} cones $U_j^{\pm}$. Fix $j\geq \alpha + 1$, a sign $\epsilon \in \{+,-\}$, and an index $\ell\in \{1,\dots, 2+\bar n\}$ representing a given coordinate of $\bar \Ncal_i$ in $T_0 \Sigma$. To simplify notation, we omit depedency on $j$, $\epsilon$ and $\ell$. For each $i=1,\dots, Q$ and each coordinate , let $\bar \Lcal_i$ be the harmonic function in $V= U\cap(B_r\setminus \bar{B}_{\sigma r})$ satisfying
\[
    \begin{cases*}
        \bar\Lcal_i = \bar\Ncal_i & \text{on $F$,} \\
        \bar\Lcal_i = 0 & \text{on the rest of $\partial(U \cap (B_r\setminus B_{\sigma r}))$.}
    \end{cases*}
\]
Namely, if in polar coordinates we have
\[
    \bar\Ncal_i(r,\theta) = \sum_{k\geq 1} a_{i,k} r^{\alpha_k^+}\vphi_k(\theta) \qquad \text{on $F$,}
\]
then
\[
    \bar\Lcal_i(\rho,\theta) = \sum_{k\geq 1} a_{i,k} (A_k \rho^{\alpha_k^+} - B_k \rho^{\alpha_k^-}) \vphi_k(\theta), \qquad \rho\in (0,r]\,,
\]
for $\alpha_k^\pm = \pm \sqrt{\lambda_k}$ and $A_k, B_k$ explicitly defined by 
\begin{equation}\label{e:coeffs-annulus}
    A_k = \frac{\sigma^{\alpha_k^-}}{\sigma^{\alpha_k^-} - \sigma^{\alpha_k^+}}, \qquad B_k = \frac{\sigma^{\alpha_k^+}}{\sigma^{\alpha_k^-} - \sigma^{\alpha_k^+}} {r^{2\alpha_k^+}},
\end{equation}
so that we satisfy the boundary conditions $A_k r^{\alpha_k^+} - B_k r^{\alpha_k^-} = r^{\alpha_k^+}$ and $A_k (\sigma r)^{\alpha_k^+} = B_k (\sigma r)^{\alpha_k^-}$.

We now define $\bar\Lcal_i$ in the intersection of the annulus and the large cones $U_j^\pm$. Fix $j=1,\dots,\alpha$ and a sign $\epsilon\in\{+,-\}$ and again omit dependency on $j$,$\epsilon$, and the index $\ell \in\{1,\ldots,2+\bar n\}$ representing the component of $\bar\Ncal_i$. This time, again writing
\[
    \bar\Ncal_i(r,\theta) = \sum_{k\geq 1} a_{i,k} r^{\alpha_k^+}\vphi_k(\theta) \qquad \text{on $F$,}
\]
we define the extension $\bar\Lcal_i$ to $V=U\cap (B_r\setminus \bar{B}_{\sigma r})$ by
\[
    \bar\Lcal_i(\rho,\theta) = a_{i,1}  \rho^{\alpha_1^+} \vphi_1(\theta) + \sum_{k\geq 2} a_{i,k} (A_k \rho^{\alpha_k^+} - B_k \rho^{\alpha_k^-}) \vphi_k(\theta)\,,
\]
for $A_k,B_k$ as in \eqref{e:coeffs-annulus} and $\alpha_k^\pm = \pm \sqrt{\lambda_k}$ again.

Finally, in $B_{\sigma r}\cap U_j^\epsilon$ for $j =1,\dots,\alpha$ and $\epsilon\in \{+,-\}$, we let $\bar\Lcal_i$ be the $\alpha_1^{j,\epsilon,+}$-homogeneous radial extension of its boundary data on $\partial B_{\sigma r}\cap U_j^\epsilon$, extending by zero in $B_{r\sigma} \setminus \bigcup_{j=1}^\alpha \bigcup_{\epsilon\in \{+,-\}} U_j^\epsilon$.

We may now define $\Lcal$ as follows. The above procedure defines each coordinate and each of the $Q$ sheets of {classical $Q$-valued functions} $\bar\Lcal^{{\pm}} : B_r \to {\mathcal A}_Q(\R^{2+\bar n})$. {Observe that on each $F_j^\pm$ one has $\bar\Ncal^{\mp}=Q\llbracket \boldsymbol \eta \circ \bar\Ncal \rrbracket$, which implies that on each $U_j^\pm$ one has $\bar\Lcal^\mp=Q\llbracket \boldsymbol\eta\circ \bar\Lcal \rrbracket$. Hence, a special $Q$-valued function $\bar\Lcal \colon B_r \to \Ascr_Q(\R^{2+\bar n})$ is well defined so that the induced canonical decomposition of $B_r$ satisfies $(B_r)_{\pm} = \bigcup_j U_j^\pm$}. We then define $\Lcal$ in terms of $\bar\Lcal$ using the formula \eqref{e:L-decomp}.

\subsection{Frequency decay and proof of main results}
We are now in a position to demonstrate the following key estimate for the frequency function, from which the conclusion of Theorem \ref{t:main-structure} and Theorem \ref{t:isolated-modq-branch-pts} follows easily.

\begin{theorem}\label{t:frequency-decay}
    Suppose that $T$ is as in Assumption \ref{a:one-cm} and suppose that $\Ibf(0) = \alpha \in \N_{\geq 2}$. There exists $\bar\delta(\alpha) > 0$, $\gamma(\alpha, n, Q) >0$ and $C(\alpha, n,Q)>0$ such that the following holds. Suppose that $r_0\in (0,\tfrac{3}{2})$ is such that $|\Ibf(r_0) - \alpha| < \bar\delta$. Then {there exists $r_1=r_1(r_0) \in (0,r_0]$ such that}
    \begin{equation}\label{e:freq-decay}
        |\Ibf(r) - \alpha| \leq C\, r^\gamma \qquad \forall r \in (0,r_1]\,.
    \end{equation}
    Moreover, there exist constants $\Hbf_0>0$ and $\Dbf_0>0$ such that for every $r\in (0,r_1]$ we have
    \begin{align}
        \left|\frac{\Hbf(r)}{r^{2\alpha+1}} - \Hbf_0\right| &\leq C r^\gamma\,,\label{e:H-decay}\\
        \left|\frac{\Dbf(r)}{r^{2\alpha}} - \Dbf_0\right| &\leq C r^\gamma\,.\label{e:D-decay}
    \end{align}
\end{theorem}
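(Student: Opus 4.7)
The plan is to combine the almost-minimizing property of $\mathcal{N}$ from Proposition \ref{p:almost-min}, applied to the specific competitor $\mathcal{L}$ built in Section \ref{s:decay}, with a spherical Fourier analysis of $\bar{\mathcal{N}}$ on $\partial B_r$. This will yield an upper bound $\mathbf{I}(r) \leq \alpha + C r^\gamma$; the matching lower bound $\mathbf{I}(r) \geq \alpha - C r^\gamma$ will come directly from Corollary \ref{c:freq-monotonicity} integrated from $r = 0$, where $\mathbf{I}(0) = \alpha$ by hypothesis, giving \eqref{e:freq-decay}. The bounds \eqref{e:H-decay} and \eqref{e:D-decay} will then follow by integrating the logarithmic derivative $\mathbf{H}'(r)/\mathbf{H}(r) = (1+2\mathbf{I}(r))/r + \BigO(r^{\gamma-1})$, which is just a rewriting of \eqref{e:H'} and \eqref{e:outer-var-D}, together with the identity $\mathbf{D}(r) = \mathbf{I}(r)\mathbf{H}(r)/r$.

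For the Fourier step, I would fix $r \leq r_0$ and decompose each coordinate of $\bar{\mathcal{N}}_i$ on each arc $F_j^\epsilon \subset \partial B_r$ in the orthonormal Dirichlet eigenbasis $\{\vphi_k^{j,\epsilon}\}$, obtaining coefficients $a_{i,k}^{j,\epsilon}(r)$ and eigenvalues $\lambda_k^{j,\epsilon}$ obeying (i)--(iii). Then $\mathbf{H}(r)$ is proportional to $r\sum|a_{i,k}^{j,\epsilon}(r)|^2$, while $\int_{B_r\setminus B_{\sigma r}}|D\bar{\mathcal{L}}|^2$ decomposes into an explicit mode-by-mode sum whose weights are given by the harmonic coefficients \eqref{e:coeffs-annulus}; inside $B_{\sigma r}$ the competitor contributes only the first-mode $\alpha_1^{j,\epsilon,+}$-homogeneous extension on the large cones, plus a piece on the small cones whose size is tamed by Assumption \ref{a:close-to-tangent} together with the huge eigenvalues $\lambda_k^{j,\epsilon} \geq \varsigma^{-1}$. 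A direct inspection of \eqref{e:coeffs-annulus} shows that whenever $\lambda_k^{j,\epsilon} \geq (\alpha+1)^2 - \varsigma$, the harmonic annular extension of mode $k$ has strictly less Dirichlet energy than a $\sqrt{\lambda_k^{j,\epsilon}}$-radial extension would, by an amount comparable to $c_\sigma(\lambda_k^{j,\epsilon} - \alpha^2)|a_{i,k}^{j,\epsilon}|^2\, r$ with $c_\sigma > 0$ depending only on $\sigma \in (0,1)$; this spectral-gap gain is what drives the whole argument.

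Plugging these formulas into \eqref{e:competitor-energy-comparison}, controlling the $\int_{\mathbf{\Phi}(B_r)}|\boldsymbol{\eta}\circ L|$ term via an analogue of \eqref{e:avg-N} for the harmonic competitor, and using the almost-quadratic tilt excess decay of Corollary \ref{c:one-cm} to absorb the $\mathbf{D}(r)^\gamma + r$ prefactor, one arrives at an estimate of the form
\[
\mathbf{D}(r) \;\leq\; \alpha\,\frac{\mathbf{H}(r)}{r} \;-\; c_\sigma \sum_{\substack{k \geq 2 \\ \text{or } j \geq \alpha+1}} (\lambda_k^{j,\epsilon} - \alpha^2)\,|a_{i,k}^{j,\epsilon}(r)|^2\, r \;+\; C r^\gamma \mathbf{D}(r).
\]
Dropping the non-positive middle term and absorbing the last term into the left-hand side gives $\mathbf{I}(r) \leq \alpha + C r^\gamma$ on some interval $(0, r_1]$, which together with the lower bound from Corollary \ref{c:freq-monotonicity} proves \eqref{e:freq-decay}. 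Integrating $\mathbf{H}'/\mathbf{H}$ then yields \eqref{e:H-decay}, and \eqref{e:D-decay} follows from $\mathbf{D}(r) = \mathbf{I}(r)\mathbf{H}(r)/r$.

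The hard part will be the calibration of the parameters $\sigma, \eta, \varsigma$ together with $r_1$: the spectral-gap gain $c_\sigma(\lambda_k^{j,\epsilon} - \alpha^2)$ must strictly dominate the cumulative errors arising from (a) the ambient-curvature and $\boldsymbol{\eta}\circ L$ corrections in Proposition \ref{p:almost-min}, (b) the $(1+Cr)$ Lipschitz-lifting surplus picked up when passing from $\bar{\mathcal{L}}$ to $\mathcal{L}$, and (c) the non-harmonic radial extension on $B_{\sigma r}\cap U_j^\epsilon$ for $j \geq \alpha+1$, whose Dirichlet energy is controlled only by the $\BigO(\eta)$ smallness of those sectors. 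One is essentially forced to fix $\sigma \in (0,1)$ first, then choose $\eta, \varsigma$ small depending on $\sigma$, and finally shrink $r_1$ so that the Gronwall-type integration closes; arranging this chain of dependencies is the technical heart of the argument and the most likely source of obstructions.
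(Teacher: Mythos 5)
Your overall architecture (competitor comparison for the upper bound on $\Ibf$, almost-monotonicity for the lower bound, then integration of $\Hbf'/\Hbf$ and the identity $\Dbf=\Ibf\,\Hbf/r$ for \eqref{e:H-decay}--\eqref{e:D-decay}) matches the paper's, and those last two steps are carried out exactly as you describe. The genuine gap is in your central single-scale estimate
\[
\Dbf(r)\ \leq\ \alpha\,\frac{\Hbf(r)}{r}\ -\ c_\sigma\sum\bigl(\lambda_k^{j,\epsilon}-\alpha^2\bigr)\,|a_{i,k}^{j,\epsilon}(r)|^2\,r\ +\ Cr^\gamma\Dbf(r)\,,
\]
which cannot be obtained from the competitor comparison, for two reasons. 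First, the first-mode exponent $\alpha_1^{j,\epsilon,+}=\sqrt{\smash[b]{\lambda_1^{j,\epsilon}}}$ is only known to satisfy $|\lambda_1^{j,\epsilon}-\alpha^2|\leq\varsigma$, where $\varsigma$ is a fixed small constant determined by the frequency pinching $\delta$ and \emph{not} by $r$; when $\sqrt{\smash[b]{\lambda_1^{j,\epsilon}}}>\alpha$, the competitor's first-mode energy exceeds the first-mode part of $\alpha\Hbf(r)/r$ by an amount of order $\varsigma\,\Hbf(r)/r\sim\varsigma\,\Dbf(r)$, which neither your (higher-mode) gap term nor the decaying error $Cr^\gamma\Dbf(r)$ can absorb. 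Second, and more structurally, the sign of your gap term is wrong: the competitor's mode-$k$ energy coefficient is $\approx\alpha_k^{j,\epsilon,+}>\alpha$ for $k\geq 2$, so against the benchmark $\alpha\Hbf(r)/r$ (which weights every mode by $\alpha$) the higher modes contribute \emph{positively}, not negatively. The spectral gap can only be exploited by comparing the competitor energy with a combination of $\Hbf(r)/r$ and the tangential boundary energy $r\Gbf(r)=\sum\lambda_k^{j,\epsilon}(a_{i,k}^{j,\epsilon})^2r^{2\alpha_k^{j,\epsilon,+}}$, whose quadratic weight $\lambda_k$ beats the linear weight $\alpha_k^+$ of the competitor; this is exactly \eqref{e:competitor-Dir}. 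Since $r\Gbf(r)\approx r\Dbf'(r)/2$ by the inner variation \eqref{e:inner-var-D'}, the resulting estimate is unavoidably a \emph{differential} inequality in $r$, namely \eqref{e:towards-freq-der}.

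Consequently a pointwise bound $\Ibf(r)\leq\alpha+Cr^\gamma$ is not accessible at a single scale; the best the comparison gives directly is $\Ibf(r)\leq\alpha+C\varsigma$. The missing idea is to convert \eqref{e:towards-freq-der} into $\Ibf'(r)\geq\frac{2}{r}(\Ibf(r)-\alpha)(\alpha+\gamma-\Ibf(r))-\Ecal_2(r)$ and integrate the weighted quantity $(\Ibf(\rho)-\alpha)/\rho^{\sfrac{\gamma}{2}}$ over $[r,r_1]$: the quadratic structure of the right-hand side, together with the a priori pinching $\Ibf\leq\alpha+\gamma/4$ on $(0,r_1]$ supplied by almost-monotonicity, is what makes the Gronwall argument close and produces the power law. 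Once you insert this ODE step in place of your direct absorption, the remainder of your outline goes through.
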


\begin{remark}\label{r:uniqueness-fine-blowup}
    Observe that given the estimates \eqref{e:freq-decay}-\eqref{e:D-decay}, we obtain \emph{in any codimension} the uniqueness of the fine blow-up as defined in \cite{DMS-modp}*{Section 11.2}, with a power law decay of the rescalings of $T$ towards it. Indeed, to conclude this decay for the rescalings of the multigraph $\Fcal$ associated to $\Ncal$, the argument is exactly the same as \cite{DLS_MAMS}*{Proof of Theorem 5.3}. The only difference is that $r\mapsto\Ibf(r)$, $r\mapsto \frac{\Hbf(r)}{r^{2\alpha+1}}$ and $r\mapsto \frac{\Dbf(r)}{r^{2\alpha}}$ are not monotone but merely almost-monotone, thus our decay estimates in Theorem \ref{t:frequency-decay} are in absolute value. However, this does not yield the perturbative statement in Theorem \ref{t:main-structure}, which is specific to codimension 1; cf. Remark \ref{r:higher-codim}. To pass the decay from the rescalings of $\Fcal$ to the rescalings of $T$, we simply use the estimates of \cite{DLHMS}*{Theorem 17.24} combined with a change of variables.
\end{remark}

Before coming to the proof of Theorem \ref{t:frequency-decay}, let us use it to deduce Theorem \ref{t:main-structure} and Theorem \ref{t:isolated-modq-branch-pts}. 

\begin{proof}[Proof of Theorem \ref{t:isolated-modq-branch-pts}]
    We argue by contradiction. Suppose that there exists a point $p\in \Ffrak_Q(T)$ which is a limit point of a sequence $\{p_k\}\subset \mathfrak{F}_Q(T)$. Without loss of generality, we may assume that $p= 0$. Upon suitably rescaling $T$, Assumption \ref{a:main-m0} is satisfied, whence Corollary \ref{c:one-cm} guarantees that for all $k$ large enough $p_k = \mathbf\Phi(z_k) \in \mathbf\Phi(\Gamma)\subset\Mcal$ for $k$ sufficiently large. 
    
    If we consider $r_k := 2|z_k|$, $\Ncal_{r_k}$ converge (up to subsequence) in $W^{1,2}_\loc(B_{3/2})$ to a non-trivial $\alpha$-homogeneous Dir-minimizer $u: B_{3/2}(\pi_0) \to \Ascr_Q(\R^{\bar n})$ with $\boldsymbol\eta\circ u = 0$ and $u(0)=Q\llbracket 0\rrbracket$. Moreover, persistence of $Q$-points (cf. \cite{Sk21}*{Section 8}) guarantees that $\frac{p_k}{r_k}$ must subsequentially converge to a point $z\in B_{3/2}(\pi_0)\setminus \{0\}$ with $u(z) = Q\llbracket 0\rrbracket$. On the other hand, by Theorem \ref{t:tangents} (see \cite{DLHMSS-fine-structure}*{Theorem 3.6} for the case $\bar n =1$), we must have $I_u(z,0) = 1$ and the graph of the tangent function to $u$ at $z$ is an open book (see \cite{DLHMSS} for a precise definition). The strong $W^{1,2}_{\loc}$ convergence guarantees that the hypotheses of \cite{DMS-modp}*{Theorem 2.6} hold for the rescaling $T_{z,\rho}$ with $\rho>0$ sufficiently small. Thus, by \cite{DMS-modp}, 
    this is in contradiction with the convergence of $\frac{p_k}{r_k}$ to $z$.
\end{proof}

\begin{proof}[Proof of Theorem \ref{t:main-structure}]
    Let $r_1$ be as in Theorem \ref{t:frequency-decay}. As discussed in Remark \ref{r:uniqueness-fine-blowup}, Theorem \ref{t:frequency-decay} combined with the classification of \cite{DLHMSS-fine-structure}*{Theorem 3.6} guarantees that, up to composing with a rotation, the rescalings $T_{0, \rho}\mres \Bbf_1$ converge with a power law in two-sided $L^2$ excess to a multigraph of the form
    \[
        \Gcal(r,\theta) =\begin{cases}
            \left(\sum_i \llbracket (r,\theta) +  c_{j,i}^+\, r^{I_0}\sin(I_0\theta)\rrbracket, +1\right) & \text{if} \ (r,\theta) \in U_j^+ \\
            \left(\sum_i \llbracket (r,\theta) + c_{j,i}^-\, r^{I_0} \sin(I_0\theta)\rrbracket, -1\right) & \text{if} \ (r,\theta) \in U_j^-\,,
        \end{cases}
    \]
    for some $c_{j,i}^\pm\in\R$, $I_0 \in \N_{\geq 2}$ and
    \begin{align*}
        U_j^+ &= \left\{(r,\theta) : 0<r<r_0, \quad \tfrac{2j\pi}{I_0} < \theta \leq \tfrac{(2j+1)\pi}{I_0}\right\}\,, \\ U_j^- &= \left\{(r,\theta) : 0<r<r_0, \quad \tfrac{(2j+1)\pi}{I_0} < \theta \leq \tfrac{(2j+2)\pi}{I_0} \right\}\,.
    \end{align*}
    Namely,
    \[
        r^{-4} \int_{\Bbf_r} \dist^2(x, \spt (\Tbf_{\Gcal}))\, d\|T\| \leq C_0 r^{2\alpha}\,,
    \]
    for some $\alpha>0$ and all $r\in (0,r_2)$, for some $r_2>0$ sufficiently small. This in turn implies that for every $p\in \spt^q(T)\cap\Bbf_{r_2}\setminus \{0\}$ and $r(p) = c_0 |p|$ for $c_0>0$ chosen appropriately, we have
    \[
        r(p)^{-4} \int_{\Bbf_{r(p)}(p)} \dist^2(x, \spt (\Tbf_{\Gcal}))\, d\|T\| \leq \Lambda_0 r(p)^{2\alpha}\,.
    \]
    In particular, for each $p \in \spt^q (\Tbf_{\Gcal})\cap \Bbf_1\setminus \{0\}$, there exists $\rho(p)>0$ such that either
    \begin{itemize}
        \item[(a)] the hypotheses of \cite{DLHMSS}*{Assumption 1.8} are satisfied for the rescaling $T_{p, \rho(p)}$;
        \item[(b)] $\Theta(T_{p,\rho(p)},x) < Q$ for every $x\in \spt^q(T_{p,\rho(p)})\cap \Bbf_1$.
    \end{itemize}
    In case (a), we may apply the results of \cite{DLHMSS} to deduce that $T\mres \Bbf_{c_0\rho(p)}(p)$ is a $C^{1,\alpha}$-perturbation of $\Tbf_{\Gcal}\mres \Bbf_{c_0\rho(p)}(p)$ for some geometric constant $c_0\in (0,1)$. In case (b), we may instead apply \cite{White-regularity} to again deduce that $T\mres \Bbf_{c_0\rho(p)}(p)$ is a $C^{1,\alpha}$-perturbation of $\Tbf_{\Gcal}\mres \Bbf_{c_1\rho(p)}(p)$ for some $c_1>0$, by a map $\psi$ with $\|\psi\|_{C^{1,\alpha}} \leq C \Lambda_0$. Applying the $(5r)$-covering Lemma to the collection of balls
    \[
        \{\Bbf_{c_0\rho(p)}(p) : p \in \spt^q (\Tbf_{\Gcal})\cap \Bbf_1\setminus \{0\}\}\,
    \]
    we conclude the desired result.
\end{proof}

The key intermediate estimate towards proving Theorem \ref{t:frequency-decay} is contained in the following lemma.

\begin{lemma}\label{l:freq-key-estimate}
     Suppose that $T$ is as in Assumption \ref{a:one-cm} and suppose that $\Ibf(0) = \alpha \in \N_{\geq 2}$. There exists $\gamma(\alpha,n,Q)\in (0,1]$, $\delta(\alpha,\gamma)>0$,  and $C=C(\alpha,n,Q)>0$ such that if $|\Ibf(r) - \alpha| < \delta$ for some $r\in (0,\bar{r}]$, where $\bar{r}$ is the threshold of \cite{DLHMSS-fine-structure}*{Lemma 11.3}, then
     \begin{equation}\label{e:towards-freq-der}
         (2\alpha +\gamma)\Dbf(r) \leq \frac{r\Dbf'(r)}{2} + \frac{\alpha(\alpha+\gamma)\Hbf(r)}{r} + {\Ecal_1(r)}\,,
     \end{equation}
     where
    \begin{equation}\label{e:freq-decay-intermediate-error}
        \Ecal_1(r) := Cr^2 \Dbf'(r) + Cr\Dbf(r) + Cr^2 \int_{\partial B_r} |\boldsymbol{\eta}\circ \Ncal| + Cr^{\gamma} \int_{\mathbf\Phi(B_r)}|DN|^2\,.
    \end{equation}
     Consequently, we have
     \begin{equation}\label{e:freq-radial-der}
         \Ibf'(r) \geq \frac{2}{r}(\Ibf(r) - \alpha) (\alpha + \gamma - \Ibf(r)) {-\Ecal_2(r)}\,,
     \end{equation}
     where $\Ecal_2$ is the non-negative error given by
     \[
        \Ecal_2(r)= Cr^\gamma \Hbf(r)^{-1} \int_{\mathbf\Phi(B_r)} |DN|^2 +2\Hbf(r)^{-1}\Ecal_1(r)\,.
    \]
\end{lemma}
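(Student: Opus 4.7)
The plan is to first establish \eqref{e:towards-freq-der} by applying the almost-Dir-minimality of $\Ncal$ (Proposition \ref{p:almost-min}) to the piecewise-explicit competitor $\bar\Lcal$ constructed above, performing a mode-by-mode Fourier computation on each arc of $\partial B_r$, and using the eigenvalue gap supplied by Assumption \ref{a:close-to-tangent}. Once \eqref{e:towards-freq-der} is in hand, \eqref{e:freq-radial-der} will follow by differentiating $\Ibf(r) = r\Dbf(r)/\Hbf(r)$ and invoking the variational identities of Proposition \ref{p:variations}.

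\textit{Proof of \eqref{e:towards-freq-der}.} I would first apply Proposition \ref{p:almost-min} to obtain
\[
\Dbf(r) \leq (1+Cr)\int_{B_r}|D\bar\Lcal|^2 + C(\Dbf(r)^\gamma + r)\Dbf(r) + Cr\int_{\mathbf\Phi(B_r)}|\boldsymbol{\eta}\circ L|\,,
\]
and reduce the last term to a multiple of $r^2\int_{\partial B_r}|\boldsymbol{\eta}\circ\Ncal|$ (modulo contributions fitting into $\Ecal_1$) via Fubini on $B_r$ and the explicit mode-by-mode structure of $\boldsymbol{\eta}\circ\bar\Lcal$. On each arc $F_j^\pm$ I would expand $\bar\Ncal_i(r,\cdot)|_{F_j^\pm} = \sum_k b_k^{j,\pm}(r)\,\vphi_k^{j,\pm}$ in the Dirichlet eigenbasis, and then, using separation of variables with the coefficients \eqref{e:coeffs-annulus}, a direct computation yields
\[
\int_{B_r}|D\bar\Lcal|^2 = \sum_{j,\pm,k}\alpha_k^{j,\pm,+}\bigl(b_k^{j,\pm}(r)\bigr)^2\bigl(1 + R_{j,\pm,k}(\sigma)\bigr)\,,
\]
where $R_{j,\pm,k}$ vanishes for mode $1$ in the large cones (homogeneous extension throughout $B_r \cap U_j^\pm$) and is of order $\sigma^{2\alpha_k^{+}}$ otherwise. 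By orthonormality $\Hbf(r)/r = \sum(b_k^{j,\pm})^2$ and $r\Gbf(r) = \sum\lambda_k^{j,\pm}(b_k^{j,\pm})^2$, and \eqref{e:inner-var-D'} gives $r\Dbf'(r)/2 = r\Gbf(r) + \textup{error} \subset \Ecal_1$. Substituting into the target inequality, the coefficient of $(b_k^{j,\pm})^2$ becomes
\[
(2\alpha+\gamma)\alpha_k^{j,\pm,+} - (\alpha_k^{j,\pm,+})^2 - \alpha(\alpha+\gamma) = -(\alpha_k^{j,\pm,+}-\alpha)(\alpha_k^{j,\pm,+}-\alpha-\gamma)\,,
\]
up to $O(\sigma^{2\alpha_k^+})$ corrections. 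Properties (ii)-(iii) in Assumption \ref{a:close-to-tangent} give $\alpha_k^{+}\geq \alpha+1 - O(\sqrt{\varsigma})$ for all modes \emph{except} mode $1$ in the large cones, so those contributions are strictly negative with a definite margin once $\gamma<1/2$ and $\varsigma$ is small. For mode $1$ in a large cone, property (i) yields $|\alpha_1^{j,\pm,+}-\alpha|\leq C\sqrt{\varsigma}$; the worst-case positive defect of size $O(\sqrt{\varsigma}\,\gamma)(b_1^{j,\pm})^2$ is absorbed into the $C r^\gamma\int_{\mathbf\Phi(B_r)}|DN|^2$ and $C r\Dbf(r)$ summands of $\Ecal_1$, by bounding $(b_1^{j,\pm})^2 \leq C\Hbf(r)/r$ and using the lower bound $\Ibf \geq C^{-1}$ from \eqref{e:unif_freq}, upon choosing $\delta$ (hence $\varsigma$) small enough depending on $\gamma$.

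\textit{Proof of \eqref{e:freq-radial-der}.} Differentiating $\Ibf = r\Dbf/\Hbf$ and using \eqref{e:H'} gives
\[
\Ibf'(r) = \frac{r\Dbf'(r)}{\Hbf(r)} - \frac{2\Ibf(r)\Ebf(r)}{\Hbf(r)}\,.
\]
Replacing $\Ebf$ by $\Dbf$ via \eqref{e:outer-var-D} (with the discrepancy absorbed into $\Ecal_2$), substituting the lower bound on $r\Dbf'(r)$ supplied by \eqref{e:towards-freq-der}, and using $\Dbf/\Hbf = \Ibf/r$, the leading term factors as
\[
\frac{2}{r}\bigl[\Ibf(2\alpha+\gamma)-\Ibf^2-\alpha(\alpha+\gamma)\bigr] = \frac{2}{r}(\Ibf-\alpha)(\alpha+\gamma-\Ibf)\,,
\]
which delivers exactly \eqref{e:freq-radial-der}. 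The hard part will be the first-mode analysis in the large cones: since $\alpha_1^{j,\pm,+}$ is only close to $\alpha$ and not equal to it, mode $1$ produces an algebraic defect that does not automatically decay with $r$, and its absorption into $\Ecal_1$ forces a delicate coupling between the closeness parameter $\eta$ in Assumption \ref{a:close-to-tangent}, the frequency pinching threshold $\delta$, and the decay exponent $\gamma$, so that the eigenvalue perturbation from property (i) is compatible with the error structure \eqref{e:freq-decay-intermediate-error}.
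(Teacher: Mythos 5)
Your overall strategy is the one the paper uses: test the almost-minimality of Proposition \ref{p:almost-min} with the explicit Fourier competitor $\bar\Lcal$, compare coefficients mode by mode using the eigenvalue information (i)--(iii), convert $r\Gbf(r)$ into $\tfrac{r}{2}\Dbf'(r)$ via \eqref{e:inner-var-D'}, and then obtain \eqref{e:freq-radial-der} by differentiating $\Ibf$ with \eqref{e:H'} and \eqref{e:outer-var-D}. Your second step is correct and matches the paper verbatim. The difference is that the paper outsources the coefficient comparison to Lemma \ref{l:competitor-estimates}, whose conclusion \eqref{e:competitor-Dir} is the \emph{exact} inequality $(2\alpha+\gamma)\int_{B_r}|D\bar\Lcal|^2 \leq r\int_{\partial B_r}|\nabla_\theta\bar\Ncal|^2 + \tfrac{\alpha(\alpha+\gamma)}{r}\Hbf(r)$ with no leftover defect, whereas you re-derive it inline and leave a residual first-mode defect.

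That residual is a genuine gap. For the first mode on a large cone the coefficient deficit is $\bigl[\gamma(\alpha_1^{j,\pm,+}-\alpha)-(\alpha_1^{j,\pm,+}-\alpha)^2\bigr](b_1^{j,\pm})^2$, which you bound by $O(\sqrt{\varsigma}\,\gamma)(b_1^{j,\pm})^2 \leq C\sqrt{\varsigma}\,\gamma\,\Hbf(r)/r \leq C\sqrt{\varsigma}\,\gamma\,\Dbf(r)$ and propose to absorb into the $Cr^\gamma\int_{\mathbf\Phi(B_r)}|DN|^2$ and $Cr\Dbf(r)$ summands of $\Ecal_1$. This cannot work: $\varsigma$ (hence $\delta$) is a constant fixed once and for all, independent of $r$, while every term in \eqref{e:freq-decay-intermediate-error} carries a positive power of $r$; as $r\downarrow 0$ a defect of fixed relative size $\sqrt{\varsigma}\,\gamma\,\Dbf(r)$ dominates $r^\gamma\Dbf(r)$ and $r\Dbf(r)$. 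Keeping it would replace $\gamma$ in \eqref{e:towards-freq-der} by $\gamma-C\sqrt{\varsigma}\,\gamma\,\Ibf(r)$ only after it is fed back through the $\Dbf/\Hbf$ quotient, and more importantly it would leave a non-decaying source term in the ODE of Theorem \ref{t:frequency-decay}, yielding only $|\Ibf(r)-\alpha|\lesssim\sqrt{\varsigma}$ rather than the power law \eqref{e:freq-decay}. The paper avoids this entirely: in Lemma \ref{l:competitor-estimates} the first mode is handled by the algebraic inequality $\gamma(\alpha_1^{j,\epsilon,+}-\alpha)\leq(\alpha_1^{j,\epsilon,+}-\alpha)^2$, i.e.\ $(2\alpha+\gamma)\alpha_1^{j,\epsilon,+}\leq\lambda_1^{j,\epsilon}+\alpha(\alpha+\gamma)$, so that mode $1$ contributes \emph{no} error at all, and the margin $\varsigma$ from (ii)--(iii) is consumed only to dominate the $O(\sigma^{2\sqrt{\lambda_k}})$ annulus corrections for the modes $k\geq 2$ and the small cones. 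You correctly identified this as ``the hard part,'' but the absorption you propose does not close it; you need the exact first-mode inequality (or an argument that the first-mode deficit is non-positive), not a perturbative bound in $\varsigma$.
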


We begin by demonstrating that the validity of Lemma \ref{l:freq-key-estimate} implies the conclusion of Theorem \ref{t:frequency-decay}.

\begin{proof}[Proof of Theorem \ref{t:frequency-decay}]
    The argument follows the same line of reasoning as \cite{DLS_MAMS}*{Proof of Proposition 5.2} (see also \cite{DLSS3}*{Theorem 7.3}), up to well-behaved error terms. Nevertheless, we repeat it here for clarity. Fix $\delta$ and $\gamma$ as in Lemma \ref{l:freq-key-estimate}. First of all, the fact that $|\Ibf(r_0) - \alpha|< \bar\delta$ and the almost-monotonicity of $r\mapsto\Ibf(r)$ given by Corollary \ref{c:freq-monotonicity} guarantees the existence of $r_1\in (0,r_0]$ such that $\Ibf(r) \leq \alpha + \frac{\gamma}{4}$ for every $r \in (0, r_1]$, provided that $\bar\delta= \min\{\delta, \tfrac{\gamma}{8}\}$. 
Using the estimate \eqref{e:freq-radial-der}, we can then calculate, for every $\rho \in \left( 0, r_1\right]$
\begin{align*}
\frac{d}{d\rho}\left( \frac{\Ibf(\rho)-\alpha}{\rho^{\sfrac{\gamma}{2}}} \right) &= \frac{\Ibf'(\rho)}{\rho^{\sfrac{\gamma}{2}}} - \frac{\gamma}{2} \frac{\Ibf(\rho)-\alpha}{\rho^{{\sfrac{\gamma}{2}}+1}} \\
&\geq \frac{2}{\rho^{{\sfrac{\gamma}{2}}+1}}\left(\Ibf(\rho)-\alpha\right) \left(\alpha +\frac{3\gamma}{4} -\Ibf(\rho) \right) - \rho^{-{\sfrac{\gamma}{2}}} \mathcal E_2(\rho)\,.
\end{align*}
Now, for $\rho\in \left( 0, r_1\right]$ we have
$
\alpha+\frac{3\gamma}{4}-\Ibf(\rho) \geq \frac{\gamma}{2}
$,
and $\Ibf(\rho)-\alpha \geq -C \rho^\gamma$ as a consequence of Corollary \ref{c:freq-monotonicity}. This implies then that
\[
\frac{d}{d\rho}\left( \frac{\Ibf(\rho)-\alpha}{\rho^{\sfrac{\gamma}{2}}} \right) \geq -C\rho^{{\sfrac{\gamma}{2}}-1} -  \rho^{-{\sfrac{\gamma}{2}}} \mathcal E_2(\rho)\,.
\]
On the other hand, in light of a change of variables, the uniform upper bound on $\Ibf$ from Proposition \ref{p:variations}, and the estimate \cite{DLHMS}*{Theorem 17.24, (17.25)} on the average of the sheets of $N$ (summed over the Whitney cubes intersecting $\mathbf\Phi(\partial B_r)$), we have
\[
        |\Ecal_2(\rho)| \leq C \rho^{\gamma-1} + C \rho^{1+\gamma} \frac{\Dbf'(\rho)}{\Dbf(\rho)}\,,
    \]
so that in turn we obtain the estimate
\[
\frac{d}{d\rho}\left( \frac{\Ibf(\rho)-\alpha}{\rho^{\sfrac{\gamma}{2}}} \right) \geq -C\rho^{{\sfrac{\gamma}{2}}-1} -  \rho^{1+{\sfrac{\gamma}{2}}} \frac{\Dbf'(\rho)}{\Dbf(\rho)}\,.
\]
We integrate over $\rho\in\left[r,r_1\right]$ for abritrary $r \in \left( 0, r_1\right]$ to get
\[
 \frac{\Ibf(r_1)-\alpha}{r_1^{\sfrac{\gamma}{2}}} -  \frac{\Ibf(r)-\alpha}{r^{\sfrac{\gamma}{2}}} \geq -C r_1^{\sfrac{\gamma}{2}}\,,
\]
which gives the desired upper bound
\[
\Ibf(r)-\alpha \leq C r^{{\sfrac{\gamma}{2}}}\,. 
\]
This, combined with the lower bound $\Ibf(r)-\alpha \geq -C r^\gamma \geq  -C r^{{\sfrac{\gamma}{2}}}$ coming from the almost monotonicity Corollary \ref{c:freq-monotonicity} concludes the proof of \eqref{e:freq-decay}, with $\gamma$ given by $\sfrac{\gamma}{2}$. We are going to rename the exponent $\gamma$ for the sake of readability.

    Now we prove \eqref{e:H-decay}. Recalling \eqref{e:H'} and \eqref{e:outer-var-D}, we deduce that
    \[
        \frac{d}{dr}\left(\frac{\Hbf(r)}{r}\right) = \frac{2\Dbf(r)}{r} + O\left(r^{\gamma-1} \int_{\mathbf\Phi(B_r)} |DN|^2 \right)\,.
    \]
    Arguing as in \cite{DLS_MAMS}*{Proof of Proposition 5.2}, this in turn implies
    \begin{align*}
        \frac{d}{dr}\left( \log \frac{\Hbf(r)}{r^{2\alpha+1}}\right) = \frac{r}{\Hbf(r)} \frac{d}{dr}\left(\frac{\Hbf(r)}{r}\right) - \frac{2\alpha}{r} &= \frac{2}{r}(\Ibf(r) - \alpha) + O\left(r^{\gamma-1}  \right) \,,
    \end{align*}
    for every $r\in (0,r_1]$, where in the last estimate we are using \cite{DLHMS}*{Theorem 17.24}. Integrating this, we thus obtain
    \begin{align*}
        \log \frac{\Hbf(r_1)}{r_1^{2\alpha+1}} - \log \frac{\Hbf(r)}{r^{2\alpha+1}} = \log\left(\frac{\Hbf(r_1)}{r_1^{2\alpha+1}} \Big/ \frac{\Hbf(r)}{r^{2\alpha+1}} \right) = O(r^\gamma) \qquad \forall \, r\in (0,r_1] \,.
    \end{align*}
    In particular, the limit
    \[
        \lim_{r\downarrow 0} \frac{\Hbf(r)}{r^{2\alpha+1}} =: \Hbf_0 > 0
    \]
    exists, and the power law decay \eqref{e:H-decay} holds.

    Finally, let us derive \eqref{e:D-decay}. To this end, we simply observe that for $\Dbf_0 := \alpha \Hbf_0$, we have the identity
    \[
        \frac{\Dbf(r)}{r^{2\alpha}} - \Dbf_0 = (\Ibf(r) - \alpha) \frac{\Hbf(r)}{r^{2\alpha+1}} + \alpha \left(\frac{\Hbf(r)}{r^{2\alpha+1}} - \Hbf_0\right)\,.
    \]
    The combined power law decays \eqref{e:freq-decay} and \eqref{e:H-decay} demonstrated above allows us to therefore deduce the decay \eqref{e:D-decay}.
\end{proof}

The rest of this section is dedicated to the proof of Lemma \ref{l:freq-key-estimate}, which will involve comparing the Dirichlet energy of $\Ncal$ to that of $\bar\Lcal$ constructed above. We begin with the following preliminary estimates for $\bar\Lcal$. The first provides control of the Dirichlet energy of $\bar\Lcal$, while the second gives control on the average of the sheets of $\bar\Lcal$, which is needed to estimate the final error term in \eqref{e:competitor-energy-comparison}.

\begin{lemma}\label{l:competitor-estimates}
    Suppose that Assumption \ref{a:one-cm} holds. For any $\alpha\in \N_{\geq 2}$ and $\eta>0$, there exists $\delta=\delta(\alpha, \eta)>0$ such that if $\Ibf(0)=\alpha$ and for some $r\in (0,\tfrac{3}{2})$ sufficiently small we have $|\Ibf(r)-\alpha|<\delta$, then Assumption \ref{a:close-to-tangent} holds with this choice of $\eta$.
    
    In particular, there exists $\delta=\delta(\alpha)>0$, $\gamma=\gamma(\alpha,n,Q)\in(0,1]$ and $C=C(\alpha,n,Q)>0$ such that if $\Ibf(0)=\alpha$, $|\Ibf(r)-\alpha|<\delta$ for some $r\in (0,\tfrac{3}{2})$, then the competitor $\Lcal$ constructed above satisfies the estimates
    \begin{align}
        (2\alpha+\gamma)\int_{B_r} |D \bar\Lcal|^2 &\leq r \int_{\partial B_r} |\nabla_\theta \bar \Ncal|^2 + \frac{\alpha(\alpha+\gamma)}{r}\Hbf(r) \,,\label{e:competitor-Dir} \\
        \int_{B_r} |\boldsymbol\eta\circ \bar\Lcal| &\leq r \int_{\partial B_r} |\boldsymbol\eta\circ \bar \Ncal| + C r\Dbf(r) \,,\label{e:competitor-avg}
    \end{align}
    where $\nabla_\theta$ denotes the tangential gradient in $\partial B_r$.
\end{lemma}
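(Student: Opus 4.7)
The argument naturally splits into three parts: establishing Assumption \ref{a:close-to-tangent} from the frequency pinching, then using the resulting spectral information to estimate the Dirichlet energy of $\bar\Lcal$ mode-by-mode, and finally a simpler estimate for the average.

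\textbf{Step 1 (obtaining Assumption \ref{a:close-to-tangent}).} I argue by contradiction and compactness. Suppose the first claim fails; then there exist $\eta>0$, a sequence $r_k\downarrow 0$ and $\delta_k\downarrow 0$ such that $|\Ibf(r_k)-\alpha|<\delta_k$ but the containment in Assumption \ref{a:close-to-tangent} is violated at scale $r_k$ with parameter $\eta$. The uniform frequency bounds in Proposition \ref{p:variations} and the almost-monotonicity from Corollary \ref{c:freq-monotonicity} allow one to extract a $W^{1,2}_\loc(B_{3/2})$ subsequential limit $u$ of the rescalings $\Ncal_{r_k}$; as in \cite{DLHMSS-fine-structure}*{Proposition 2.8} (adapted to the higher codimension setting via \cite{DMS-modp}), $u$ is a non-trivial $\alpha$-homogeneous special $Q$-valued Dir-minimizer with $\boldsymbol\eta\circ u=0$. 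The classification Theorem \ref{t:tangents} then forces $u$ to consist of exactly $\alpha$ positive and $\alpha$ negative sectors of angular opening $\pi/\alpha$, and persistence of the positive/negative regions under strong $W^{1,2}$-convergence on $\partial B_1$ contradicts the failure of Assumption \ref{a:close-to-tangent}. The quantitative eigenvalue properties (i)--(iii) on $r^{-1}F_j^\pm$ then follow by standard Dirichlet eigenvalue stability for the angular Laplacian under Hausdorff perturbations of the arcs.

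\textbf{Step 2 (proving \eqref{e:competitor-Dir}).} Expanding the boundary datum $\bar\Ncal|_{\partial B_r\cap F_j^\pm}=\sum_k a_{i,k}^{j,\pm}r^{\alpha_k^{j,\pm,+}}\vphi_k^{j,\pm}$ and using the explicit formulas for $\bar\Lcal$, a modewise application of Green's identity yields closed expressions: on large cones ($j\leq\alpha$) the first mode is pure homogeneous with energy $\alpha_1^{j,\pm,+}\sum_i(a_{i,1}^{j,\pm})^2 r^{2\alpha_1^{j,\pm,+}}$; higher modes on large cones (and all modes on small cones) vanish on $\partial B_{\sigma r}$ by the choice of $A_k,B_k$, giving annular energies $\alpha_k^{j,\pm,+}\sum_i(a_{i,k}^{j,\pm})^2r^{2\alpha_k^{j,\pm,+}}\tfrac{1+\sigma^{2\alpha_k^{j,\pm,+}}}{1-\sigma^{2\alpha_k^{j,\pm,+}}}$. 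Meanwhile the RHS of \eqref{e:competitor-Dir} decomposes spectrally as $\sum_{j,\pm,k}\bigl(\lambda_k^{j,\pm}+\alpha(\alpha+\gamma)\bigr)\sum_i(a_{i,k}^{j,\pm})^2 r^{2\alpha_k^{j,\pm,+}}$. The modewise difference RHS$-$LHS reduces (modulo an $O(\sigma^{2\alpha})$ correction for $k\ge 2$) to the quadratic $(\alpha_k^{j,\pm,+}-\alpha)(\alpha_k^{j,\pm,+}-\alpha-\gamma)$ times the modal energy. For the first mode on large cones, (i) gives $\alpha_1^{j,\pm,+}=\alpha+O(\varsigma^{1/2})$, so this quadratic can be slightly negative; for higher modes on large cones, (ii) produces a uniformly positive gap of order $1-\gamma$, and small cones yield an even larger gap via (iii). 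Ambient-manifold errors from identifying $\bar\Lcal$ with $\Lcal$ and from the curvature of $\Mcal$ contribute only terms controlled by $\mbf_0^{1/2}\Dbf(r)$. Choosing the parameters in the order $\gamma\ll 1$, then $\sigma,\varsigma\ll\gamma$, then $\eta$ small enough (via Step 1) to guarantee $\varsigma$, and finally $\delta$ small to guarantee $\eta$, the positive higher-mode spectral gap absorbs both the potential first-mode deficit and the ambient errors.

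\textbf{Step 3 (proving \eqref{e:competitor-avg}) and main obstacle.} For any $\beta$-homogeneous $f(\rho,\theta)=\rho^\beta g(\theta)$ one has the elementary identity $\int_{B_r\cap U}|f|=\tfrac{r}{\beta+2}\int_{\partial B_r\cap U}|f|$, which applied to the first-mode part of $\boldsymbol\eta\circ\bar\Lcal$ on large cones immediately gives a bound by $r\int_{\partial B_r}|\boldsymbol\eta\circ\bar\Ncal|$. The annular higher-mode parts involve both $\rho^{\alpha_k^+}$ and $\rho^{-\alpha_k^+}$, and their $L^1$-norms on $B_r$ are controlled by Cauchy--Schwarz in terms of the modal $L^2$ quantities $(a_k)^2 r^{2\alpha_k^++2}$; using $\sum_k \lambda_k(a_k)^2 r^{2\alpha_k^+}\leq C\int_{\partial B_r}|\nabla_\theta\bar\Ncal|^2$ together with the uniform lower bound $\Ibf(r)\geq C^{-1}$ from Proposition \ref{p:variations}, these are absorbed into $Cr\Dbf(r)$. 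The hardest technical point is the near-equality in Step 2: the first-mode contributions on the two sides of \eqref{e:competitor-Dir} match \emph{exactly} when $\alpha_1^{j,\pm,+}=\alpha$, leaving no slack there, so the small perturbation $\alpha_1^{j,\pm,+}-\alpha=O(\varsigma^{1/2})$ must be dominated by the $O(1-\gamma)$ gap from modes $k\ge 2$. This dictates the order of parameter choice above and forces $\gamma$ to be small (but explicit, depending only on $\alpha$).
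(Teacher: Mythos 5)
Your overall architecture matches the paper's: a compactness/contradiction argument for Assumption \ref{a:close-to-tangent}, an explicit mode-by-mode computation of $\int_{B_r}|D\bar\Lcal|^2$ on the annular and conical pieces, a spectral comparison of coefficients, and a homogeneity identity for the average. However, your treatment of the first eigenmode on the large cones in Step 2 has a genuine gap. The modewise inequality needed there is $(2\alpha+\gamma)\,\alpha_1^{j,\epsilon,+}\leq\lambda_1^{j,\epsilon}+\alpha(\alpha+\gamma)$, i.e. $(\alpha_1^{j,\epsilon,+}-\alpha)(\alpha_1^{j,\epsilon,+}-\alpha-\gamma)\geq 0$; when $0<\alpha_1^{j,\epsilon,+}-\alpha<\gamma$ this fails, producing a deficit of order $\gamma\,|\alpha_1^{j,\epsilon,+}-\alpha|$ \emph{per unit of first-mode energy}. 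The first-mode energy is comparable to $\Dbf(r)$ (by the convergence to the homogeneous blow-up it carries essentially all of the energy), whereas the surplus you propose to absorb it into --- the spectral gap of the modes $k\geq 2$ and of the small cones --- is proportional to the energy stored in those modes, which by the same convergence is $o(\Dbf(r))$ and can vanish identically (e.g.\ if $\bar\Ncal|_{\partial B_r}$ happens to be purely first-mode). The absorption therefore goes in the wrong direction: a deficit of size $c\,\gamma\varsigma\,\Dbf(r)$ cannot be paid for by a gain of size $O(\eta\,\Dbf(r))$ with $\eta$ chosen after $\varsigma$, and no fixed multiple of $\Dbf(r)$ (without a factor $r^\gamma$) is tolerable as an error in \eqref{e:competitor-Dir}, since it feeds a non-integrable $-c/r$ term into the frequency differential inequality. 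The paper closes this step differently: it asserts that the first-mode comparison holds \emph{exactly}, invoking that $\alpha$ and $\alpha_1^{j,\epsilon,+}$ are integers, so that either $\alpha_1^{j,\epsilon,+}=\alpha$ (equality) or $|\alpha_1^{j,\epsilon,+}-\alpha|\geq 1>\gamma$ (correct sign); no slack from the other modes is invoked, and none is available.

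A secondary omission: in the compactness step the paper extracts not only Assumption \ref{a:close-to-tangent} but also the quantitative Fourier-localization estimate \eqref{e:Fourier-coeffs-projection-est}, bounding the total $L^2(\partial B_r)$ mass of the higher modes and of the small components by $\eta\,\Dbf(r)$. That estimate is what allows one to replace $\int_{\partial B_r}|\boldsymbol\eta\circ\bar\Ncal|$ by its first-mode part and to control the annular higher-mode contributions in \eqref{e:competitor-avg}. Your substitute --- Cauchy--Schwarz against $\sum_k\lambda_k (a_k)^2 r^{2\alpha_k^+}\leq C\,r\,\Gbf(r)$ combined with the lower frequency bound --- yields a bound of the form $C r^{2}\bigl(r\,\Gbf(r)\bigr)^{1/2}$, which is not controlled by $C r\,\Dbf(r)$ for small $r$: for an $\alpha$-homogeneous profile with $\alpha\geq 2$ one has $r^{2}\Dbf(r)^{2}\ll r^{5}\Gbf(r)$. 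You should prove \eqref{e:Fourier-coeffs-projection-est} alongside Assumption \ref{a:close-to-tangent} in Step 1 and route the higher-mode $L^1$ estimates through it, as the paper does.
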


\begin{proof}[Proof of Lemma \ref{l:competitor-estimates}]
    The first claim follows from a simple compactness argument. In fact, given $\alpha\in \N_{\geq 2}$, for any $\eta>0$, we claim that under the assumptions of the lemma, there exists $\delta=\delta(\alpha, \eta)>0$ such that both Assumption \ref{a:close-to-tangent} and the estimate
    \begin{equation}\label{e:Fourier-coeffs-projection-est}
        \sum_{i=1}^Q \sum_{\epsilon\in \{+,-\}} \left[\sum_{j=1}^\alpha \sum_{k\geq 2} (a_{i,k}^{j,\epsilon})^2 r^{2\alpha_k^{j,\epsilon,+}+1} + \sum_{j\geq \alpha+1} \sum_k (a_{i,k}^{j,\epsilon})^2 r^{2\alpha_k^{j,\epsilon,+}+1} \right] \leq \eta \Dbf(r)\,, 
    \end{equation}
    hold with this choice of $\eta$. We prove \eqref{e:Fourier-coeffs-projection-est} by contradiction. Indeed, if this is not true, then we may extract a sequence of currents $T_l$, together with a sequence $r_l \downarrow 0$, as well as corresponding center manifolds $\Mcal_l= \mathbf\Phi_l(B_{3/2})$, $\Mcal_l$-normal approximations $N_l$, and reparameterizations 
    \[
        \Ncal^{(l)}:= \frac{N_l \circ \mathbf\Phi_l(r_l \cdot)}{\Dbf_l(r_l)^{1/2}}
    \]
    over $B_{3/2} \subset \pi_0$, such that the compactness procedure in \cite{DMS-modp}*{Section 11.2} together with Theorem \cite{DMS-modp}*{Theorem 11.5} (see \cite{DLHMSS-fine-structure}*{Proposition 2.8} for the case $\bar n =1$) guarantees that
    \begin{itemize}
        \item $\Ncal^{(l)}$ converges (up to subsequence, not relabelled) in $W^{1,2}_\loc(B_{3/2})$ and locally uniformly to a non-trivial $\alpha$-homogeneous Dir-minimizer $u:B_{3/2}\to\Ascr_Q(\R^{\bar n})$ with $\boldsymbol\eta\circ u=0$ and $u(0) = Q\llbracket 0 \rrbracket$;
        \item either Assumption \ref{a:close-to-tangent} or \eqref{e:Fourier-coeffs-projection-est} fails for some $\eta>0$ and every $l$.
    \end{itemize}
    The classification of Theorem \ref{t:tangents} (see \cite{DLHMSS-fine-structure}*{Theorem 3.6} for the case $\bar n=1$) of homogeneous $\Ascr_Q$-valued Dir-minimizers combined with the explicit form of any homogeneous harmonic polynomial on $\R^2$ immediately contradicts the failure of Assumption \ref{a:close-to-tangent}. Meanwhile, the failure of \eqref{e:Fourier-coeffs-projection-est} also yields a contradiction, due to the $L^2_\loc$-convergence of $\Ncal^{(l)}$ to $u$, combined with the observation that 
    \[
         \int_{\partial B_1} |\bar \Ncal^{(l)}|^2 =\Dbf_l(r_l)^{-1}\sum_{i=1}^Q\sum_{\epsilon\in\{+,-\}}\sum_j\sum_{k} (a_{i,k}^{l,j,\epsilon})^2 r_l^{2\alpha_k^{j,\epsilon,+}+1} \,.
    \]
    {Indeed, by the very structure of $u$, all contributions in the above series coming either from regions $F^{l,\pm}_j$ with $j \geq \alpha +1$ or from modes higher than the first in the regions $F^{l,\pm}_j$ with $j \in \{1,\ldots,\alpha\}$ will vanish as $l \to \infty$}.

    Fix $\eta>0$, to be determined later, which will in turn determine $\delta$ by the above. We begin with the estimate on the Dirichlet energy. First of all, note that
    \[
         \int_{\partial B_r} |\bar \Ncal|^2 = \int_{\partial B_r} |\bar \Lcal|^2 = \sum_{i=1}^Q\sum_{\epsilon\in\{+,-\}}\sum_j\sum_{k} (a_{i,k}^{j,\epsilon})^2 r^{2\alpha_k^+ +1} \,,
    \]
    and
    \[
        \int_{\partial B_r} |\nabla_\theta \bar \Ncal|^2 = \int_{\partial B_r} |\nabla_\theta \bar \Lcal|^2 = \sum_{i=1}^Q\sum_{\epsilon\in\{+,-\}}\sum_j\sum_{k} \lambda_k^{j,\eps}(a_{i,k}^{j,\epsilon})^2 r^{2\alpha_k^+-1}\,,
    \]
    We first compute the Dirichlet energy of $\Lcal$ in the annulus $B_r\setminus \bar{B}_{\sigma r}$. Consider first $j \geq \alpha + 1$, and fix a sign $\epsilon\in\{+,-\}$. In this case, since each $\Lcal_i$ is harmonic within $V_j^\epsilon = U_j^\epsilon\cap(B_r\setminus \bar{B}_{\sigma r})$ with boundary data zero outside $F_j^\epsilon$, we have
\begin{align*}
\int_{V_j^\epsilon} |D\bar \Lcal|^2 &= \sum_{i=1}^Q \int_{F_j^\epsilon} \bar \Lcal_i\partial_{r}\bar \Lcal_i \\
&=\sum_{i=1}^Q \sum_k (a_{i,k}^{j,\epsilon})^2 r^{2\alpha_k^{j,\epsilon,+}} \left(\frac{\alpha_k^{j,\epsilon,+} \sigma^{\alpha_k^{j,\epsilon,-}} - \alpha_k^{j,\epsilon,-}\sigma^{\alpha_k^{j,\epsilon,+}}}{\sigma^{\alpha_k^{j,\epsilon,-}} - \sigma^{\alpha_k^{j,\epsilon,+}}}\right) \\
&= \sum_{i=1}^Q \sum_k (a_{i,k}^{j,\epsilon})^2 r^{2\alpha_k^{j,\epsilon,+}} \alpha_k^{j,\epsilon,+} \left(\frac{\sigma^{\alpha_k^{j,\epsilon,-}} + \sigma^{\alpha_k^{j,\epsilon,+}}}{\sigma^{\alpha_k^{j,\epsilon,-}} - \sigma^{\alpha_k^{j,\epsilon,+}}}\right)\,,
\end{align*}
where we are additionally exploiting that $\alpha_k^{j,\epsilon,+} + \alpha_k^{j,\epsilon,-} = 0$. On the other hand, for $j=1,\dots,\alpha$ and again a fixed sign $\epsilon$, combining the above computation with the fact that $|Df|^2 = |\partial_\rho f|^2 + \rho^{-2}|\nabla_\theta f|^2$ for a function $f$ in polar coordinates $(\rho,\theta)$, we have
\[
\int_{V_j^\epsilon} |D\bar \Lcal|^2= \sum_{i=1}^Q \Bigg[(a_{i,1}^{j,\epsilon})^2 \alpha_1^{j,\epsilon,+}r^{2\alpha_1^{j,\epsilon,+}}(1-\sigma^{2\alpha_1^{j,\epsilon,+}}) + \sum_{k\geq 2} (a_{i,k}^{j,\epsilon})^2 r^{2\alpha_k^{j,\epsilon,+}} \alpha_k^{j,\epsilon,+} \left(\frac{\sigma^{\alpha_k^{j,\epsilon,-}} + \sigma^{\alpha_k^{j,\epsilon,+}}}{\sigma^{\alpha_k^{j,\epsilon,-}} - \sigma^{\alpha_k^{j,\epsilon,+}}}\right)\Bigg]\,.
\]
We now compute the Dirichlet energy in $B_{\sigma r}$. Here, we are simply taking the $\alpha_1^{j,\epsilon,+}$-homogeneous extension of the boundary data on $\partial B_{\sigma r}$, so we have
\[
    \int_{B_{\sigma r}} |D\bar \Lcal|^2 = \sum_{i=1}^Q \sum_{j=1}^\alpha \sum_{\epsilon\in \{+,-\}} (a_{i,1}^{j,\epsilon})^2 (r\sigma)^{2\alpha_1^{j,\epsilon,+}}\alpha_1^{j,\epsilon,+}\,.
\]
In summary, we have
\begin{align*}
\int_{B_r} |D\bar \Lcal|^2 &= \sum_{i=1}^Q \sum_{j=1}^\alpha \sum_{\epsilon\in \{+,-\}} \Bigg[(a_{i,1}^{j,\epsilon})^2 \alpha_1^{j,\epsilon,+} r^{2\alpha_1^{j,\epsilon,+}} + \sum_{k\geq 2} (a_{i,k}^{j,\epsilon})^2 r^{2\alpha_k^{j,\epsilon,+}} \alpha_k^{j,\epsilon,+} \left(\frac{\sigma^{\alpha_k^{j,\epsilon,-}} + \sigma^{\alpha_k^{j,\epsilon,+}}}{\sigma^{\alpha_k^{j,\epsilon,-}} - \sigma^{\alpha_k^{j,\epsilon,+}}}\right)\Bigg] \\
&\qquad +\sum_{i=1}^Q \sum_{j\geq \alpha + 1} \sum_{\epsilon\in \{+,-\}}\sum_{k} (a_{i,k}^{j,\epsilon})^2 r^{2\alpha_k^{j,\epsilon,+}} \alpha_k^{j,\epsilon,+} \left(\frac{\sigma^{\alpha_k^{j,\epsilon,-}} + \sigma^{\alpha_k^{j,\epsilon,+}}}{\sigma^{\alpha_k^{j,\epsilon,-}} - \sigma^{\alpha_k^{j,\epsilon,+}}}\right)\,.
\end{align*}

Thus, we wish to find $\gamma\in (0,1]$ such that
\begin{align*}
    (2\alpha+\gamma)&\sum_{i=1}^Q \sum_{j=1}^\alpha \sum_{\epsilon\in \{+,-\}} \Bigg[(a_{i,1}^{j,\epsilon})^2 \alpha_1^{j,\epsilon,+}r^{2\alpha_1^{j,\epsilon,+}} + \sum_{k\geq 2} (a_{i,k}^{j,\epsilon})^2 r^{2\alpha_k^{j,\epsilon,+}} \alpha_k^{j,\epsilon,+} \left(\frac{\sigma^{\alpha_k^{j,\epsilon,-}} + \sigma^{\alpha_k^{j,\epsilon,+}}}{\sigma^{\alpha_k^{j,\epsilon,-}} - \sigma^{\alpha_k^{j,\epsilon,+}}}\right)\Bigg] \\
&\qquad +(2\alpha+\gamma)\sum_{i=1}^Q \sum_{j\geq \alpha + 1} \sum_{\epsilon\in \{+,-\}}\sum_{k} (a_{i,k}^{j,\epsilon})^2 r^{2\alpha_k^{j,\epsilon,+}} \alpha_k^{j,\epsilon,+} \left(\frac{\sigma^{\alpha_k^{j,\epsilon,-}} + \sigma^{\alpha_k^{j,\epsilon,+}}}{\sigma^{\alpha_k^{j,\epsilon,-}} - \sigma^{\alpha_k^{j,\epsilon,+}}}\right) \\
&\leq r\sum_{i=1}^Q\sum_{\epsilon\in\{+,-\}}\sum_j\sum_{k} \lambda_k^{j,\epsilon}(a_{i,k}^{j,\epsilon})^2 r^{2\alpha_k^{j,\epsilon,+}-1} + \frac{\alpha(\alpha+\gamma)}{r} \sum_{i=1}^Q\sum_{\epsilon\in\{+,-\}}\sum_j\sum_{k} (a_{i,k}^{j,\epsilon})^2 r^{2\alpha_k^{j,\epsilon,+} +1}\,.
\end{align*}
We will now compare coefficients. Firstly, since $\alpha, \alpha_1^{j,\epsilon,+}\in \N$, we have
\begin{align*}
    \gamma(\alpha_1^{j,\epsilon,+} - \alpha) \leq (\alpha_1^{j,\epsilon,+} - \alpha)^2
\end{align*}
for any $\gamma\in (0,1)$, which, since $\alpha_1^{j,\epsilon,+}=\sqrt{\lambda_1^{j,\epsilon}}$, is equivalent to
\begin{align*}
    (2\alpha+\gamma) \alpha_1^{j,\epsilon,+} &\leq \lambda_1^{j,\epsilon} + \alpha(\alpha+\gamma)
\end{align*}
for $j=1,\dots,\alpha$ and $k=1$. Meanwhile, for $j=1,\dots,\alpha$ and $k\geq 2$, the fact that 
\begin{equation*}
    \frac{\sigma^{\alpha_k^{j,\epsilon,-}} + \sigma^{\alpha_k^{j,\epsilon,+}}}{\sigma^{\alpha_k^{j,\epsilon,-}} - \sigma^{\alpha_k^{j,\epsilon,+}}} = \frac{1+ \sigma^{2\sqrt{\lambda_k^{j,\epsilon}}}}{1-\sigma^{2\sqrt{\lambda_k^{j,\epsilon}}}} = 1 +\frac{2\sigma^{2\sqrt{\lambda_k^{j,\epsilon}}}}{1-\sigma^{2\sqrt{\lambda_k^{j,\epsilon}}}} \qquad \forall j, k, \epsilon\,,
\end{equation*}
tells us that 
\begin{align*}
    (2\alpha+\gamma) \alpha_k^{j,\epsilon,+} \left(\frac{\sigma^{\alpha_k^{j,\epsilon,-}} + \sigma^{\alpha_k^{j,\epsilon,+}}}{\sigma^{\alpha_k^{j,\epsilon,-}} - \sigma^{\alpha_k^{j,\epsilon,+}}}\right) \leq (2\alpha+\gamma) \alpha_k^{j,\epsilon,+} + \frac{2(2\alpha+\gamma) \sqrt{\lambda_k^{j,\epsilon}} \sigma^{2\sqrt{\lambda_k^{j,\epsilon}}}}{1-\sigma^{2\sqrt{\lambda_k^{j,\epsilon}}}} \,,
\end{align*}
for each such $j,k$. For a choice of $\eta(\alpha,\gamma,\varsigma)\in (0,1)$ small enough (which in turn determines $\sigma$ depending on the same parameters), we may thus ensure that
\[
    (2\alpha+\gamma) \alpha_k^{j,\epsilon,+} \left(\frac{\sigma^{\alpha_k^{j,\epsilon,-}} + \sigma^{\alpha_k^{j,\epsilon,+}}}{\sigma^{\alpha_k^{j,\epsilon,-}} - \sigma^{\alpha_k^{j,\epsilon,+}}}\right) \leq (2\alpha+\gamma) \alpha_k^{j,\epsilon,+} + \varsigma\,.
\]
Similarly, we can also ensure that for $j\geq \alpha + 1$ and all $k\geq 1$ we have
\begin{align*}
    (2\alpha+\gamma) \alpha_k^{j,\epsilon,+} \left(\frac{\sigma^{\alpha_k^{j,\epsilon,-}} + \sigma^{\alpha_k^{j,\epsilon,+}}}{\sigma^{\alpha_k^{j,\epsilon,-}} - \sigma^{\alpha_k^{j,\epsilon,+}}}\right) \leq (2\alpha+\gamma) \alpha_k^{j,\epsilon,+} + \varsigma \,.
\end{align*}
Properties (ii) and (iii) then guarantee the existence of $\gamma>0$ such that
\[
    \varsigma + \gamma \left(\sqrt{\lambda_k^{j,\epsilon}}-\alpha\right) \leq \left(\sqrt{\lambda_k^{j,\epsilon}} - \alpha\right)^2\,,
\]
which is equivalent to
\[
    (2\alpha +\gamma)\alpha_k^{j,\epsilon,+} +\varsigma \leq \lambda_k^{j,\epsilon} +\alpha(\alpha+\gamma)\,,
\]
for $j=1,\dots,\alpha$ and $k\geq 2$, and for $j\geq \alpha + 1$ and $k\geq 1$. This yields the desired estimate \eqref{e:competitor-Dir} on the Dirichlet energy of $\bar \Lcal$.

We now turn to the estimate \eqref{e:competitor-avg} on the average of the sheets of $\bar \Lcal$. We again subdivide $B_r$; we begin with the conical regions $V_j^\epsilon$ in the annulus $(B_r\setminus \bar{B}_{\sigma r})$ as above. First, consider $j \geq \alpha + 1$ and fix a sign $\epsilon \in \{+,-\}$. In this case we have
    \begin{equation*}
        \int_{V_j^\epsilon} |\boldsymbol\eta\circ \bar \Lcal| \leq \frac{1}{Q} \sum_{i=1}^Q \int_{\sigma r}^r \int_{r^{-1} F_j^\epsilon} \left|\sum_{k} a_{i,k} \left(A_k^{j,\epsilon} \rho^{\alpha_k^{j,\epsilon,+} +1} - B_k^{j,\epsilon} \rho^{\alpha_k^{j,\epsilon,-} + 1}\right)  \vphi_k^{j,\epsilon}(\theta) \right| \, d\theta \, d\rho \,.
    \end{equation*}
Notice now that for every $\sigma r \leq \rho \leq r$ it holds
\[
A_k^{j,\epsilon} \rho^{\alpha_k^{j,\epsilon,+} +1} - B_k^{j,\epsilon} \rho^{\alpha_k^{j,\epsilon,-} + 1}=\rho^{1-\alpha_k^{j,\epsilon,+}} \,\frac{\rho^{2\alpha_k^{j,\epsilon,+}}-(\sigma r)^{2\alpha_k^{j,\epsilon,+}}}{1-\sigma^{2\alpha_k^{j,\epsilon,+}}} \geq 0\,.
\]
Moreover, upon further decreasing $\eta>0$ if necessary, we can assume in what follows that $5\leq \lambda_k^{j,\epsilon}$ (recall (i)-(iii)), so that we can integrate in the $\rho$ variable and further estimate
\begin{align*}
&\qquad\int_{V_j^\epsilon} |\boldsymbol\eta\circ \bar \Lcal| \\
&\leq \frac{1}{Q} \sum_{i=1}^Q \sum_k \left( \frac{A_k^{j,\epsilon} r^{\alpha_k^{j,\epsilon,+}+2}(1-\sigma^{\alpha_k^{j,\epsilon,+} + 2})}{\alpha_k^{j,\epsilon,+} + 2} - \frac{B_k^{j,\epsilon} r^{\alpha_k^{j,\epsilon,-}+2}(1-\sigma^{\alpha_k^{j,\epsilon,-} + 2})}{\alpha_k^{j,\epsilon,-}+2}\right) \int_{r^{-1}F_j^\epsilon}  \left| a_{i,k} \varphi_k^{j,\epsilon}(\theta) \right|\,d\theta\,.
\end{align*}
    
    Meanwhile, for $j\leq \alpha$ we have
    \begin{align*}
        &\int_{V_j^\epsilon} |\boldsymbol\eta\circ \bar \Lcal| \leq \frac{1}{Q} \sum_{i=1}^Q \int_{\sigma r}^r \int_{r^{-1} F_j^\epsilon} \left|a_{i,1}^{j,\epsilon} \rho^{\alpha_1^{j,\epsilon,+}+1}\vphi_1^{j,\epsilon}(\theta) + \sum_{k\geq 2} a_{i,k}^{j,\epsilon}\left(A_k^{j,\epsilon} \rho^{\alpha_k^{j,\epsilon,+} +1} - B_k^{j,\epsilon} \rho^{\alpha_k^{j,\epsilon,-} + 1}\right)\vphi_k(\theta) \right| \\
        &\leq \frac{1}{Q} \sum_{i=1}^Q \int_{r^{-1}F_j^\epsilon} \frac{r^{\alpha_1^{j,\epsilon,+} +2}(1-\sigma^{\alpha_1^{j,\epsilon,+}+2})}{\alpha_1^{j,\epsilon,+}+2} \left|a_{i,1}^{j,\epsilon}\vphi_1^{j,\epsilon}(\theta) \right|\\
        &\quad\qquad+ \sum_{k\geq 2} \left(\frac{A_k^{j,\epsilon}r^{\alpha_k^{j,\epsilon,+}+2}(1-\sigma^{\alpha_k^{j,\epsilon,+} +2})}{\alpha_k^{j,\epsilon,+} +2} - \frac{B_k^{j,\epsilon}r^{\alpha_k^{j,\epsilon,-}+2}(1-\sigma^{\alpha_k^{j,\epsilon,-} +2})}{\alpha_k^{j,\epsilon,-} +2} \right)\left|a_{i,k}^{j,\epsilon}\vphi_k^{j,\epsilon}(\theta) \right| \, d\theta \\
        &\leq \frac{r^{\alpha_1^{j,\epsilon,+}+2}(1-\sigma^{\alpha_1^{j,\epsilon,+}+2})}{Q(\alpha_1^{j,\epsilon,+}+2)} \sum_{i=1}^Q \int_{r^{-1} F_j^\epsilon} |a_{i,1}^{j,\epsilon}\vphi_1^{j,\epsilon}(\theta)|\,d\theta \\
        &\quad+ \frac{1}{Q}\sum_{k\geq 2}\int_{r^{-1}F_j^\epsilon}\left(\frac{A_k^{j,\epsilon}r^{\alpha_k^{j,\epsilon,+}+2}(1-\sigma^{\alpha_k^{j,\epsilon,+} +2})}{\alpha_k^{j,\epsilon,+} +2} - \frac{B_k^{j,\epsilon}r^{\alpha_k^{j,\epsilon,-}+2}(1-\sigma^{\alpha_k^{j,\epsilon,-} +2})}{\alpha_k^{j,\epsilon,-} +2} \right)\left|a_{i,k}^{j,\epsilon}\vphi_k^{j,\epsilon}(\theta) \right| \, d\theta
    \end{align*}
    Finally, in $B_{\sigma r}$ we have
    \begin{align*}
        \int_{B_{\sigma r}} |\boldsymbol\eta\circ \Lcal| &\leq \frac{(\sigma r)^{\alpha_1^{j,\epsilon,+}+2}}{Q(\alpha_1^{j,\epsilon,+}+2)} \sum_{i=1}^Q \sum_{j=1}^\alpha \sum_{\epsilon \in \{+,-\}}\int_{r^{-1} F_j^\epsilon} \left|a_{i,1}^{j,\epsilon} \vphi_1^{j,\epsilon}(\theta)\right|\, d\theta
    \end{align*}
    Combining these estimates with \eqref{e:Fourier-coeffs-projection-est} which implies
    \begin{align*}
        \int_{\partial B_r} |\boldsymbol\eta\circ \bar\Ncal| &= \int_{\partial B_r} |\boldsymbol\eta\circ \bar\Lcal|= \frac{1}{Q}\sum_{i=1}^Q \sum_j \sum_{\epsilon\in \{+,-\}} r^{\alpha_k^{j,\epsilon,+}+1}\int_{r^{-1} F_j^\epsilon} \left|\sum_k 
        a_{i,k}^{j,\epsilon}\vphi_k^{j,\epsilon}(\theta)\right|\,d\theta \\
        &= \frac{1}{Q}\sum_{i=1}^Q \sum_{j=1}^\alpha \sum_{\epsilon\in \{+,-\}} r^{\alpha_1^{j,\epsilon,+}+1}\int_{r^{-1}F_j^\epsilon} \left|
        a_{i,1}^{j,\epsilon}\vphi_1^{j,\epsilon}(\theta)\right|\,d\theta + O\big(\eta \Dbf(r)\big)
    \end{align*}
    we therefore obtain
    \begin{align*}
        \int_{B_r} |\boldsymbol\eta\circ\bar\Lcal| &\leq r\int_{\partial B_r} |\boldsymbol\eta\circ \bar\Ncal| +C\eta r\Dbf(r) \\
        &\qquad+ \frac{1}{Q}\sum_{i=1}^Q\sum_{j=1}^\alpha \sum_{\epsilon\in\{+,-\}}\sum_{k\geq 2} \left(\Lambda^{j,\epsilon,+}_k - \Lambda^{j,\epsilon,-}_k \right)\left|a_{i,k}^{j,\epsilon}\vphi_k^{j,\epsilon}(\theta) \right| \\
        &\qquad+ \frac{1}{Q}\sum_{i=1}^Q\sum_{j\geq \alpha+1}\sum_{\epsilon\in\{+,-\}}\sum_{k} \left(\Lambda^{j,\epsilon,+}_k - \Lambda^{j,\epsilon,-}_k \right)\left|a_{i,k}^{j,\epsilon}\vphi_k^{j,\epsilon}(\theta) \right|\,,
    \end{align*}
    where
    \[
        \Lambda^{j,\epsilon,+}_k := \frac{A_k^{j,\epsilon}r^{\alpha_k^{j,\epsilon,+}+2}(1-\sigma^{\alpha_k^{j,\epsilon,+} +2})}{\alpha_k^{j,\epsilon,+} +2} \qquad \text{and} \qquad \Lambda^{j,\epsilon,-}_k := \frac{B_k^{j,\epsilon}r^{\alpha_k^{j,\epsilon,-}+2}(1-\sigma^{\alpha_k^{j,\epsilon,-} +2})}{\alpha_k^{j,\epsilon,-} +2}\,,
    \]
    and where $\Lambda^{j,\epsilon,+}_k-\Lambda^{j,\epsilon,-}_k \geq 0$, as already observed. It therefore remains to treat the error terms on the right-hand side. In light of \eqref{e:Fourier-coeffs-projection-est} and the definitions of $A_k$, $B_k$, and $\alpha_k^{j,\epsilon,\pm}$, we simply need to verify that 
    \begin{equation}\label{e:coeffs-bdd}
     (0\leq) \frac{\sigma^{\alpha_k^-}}{\sigma^{\alpha_k^-} - \sigma^{\alpha_k^+}}
      \frac{1-\sigma^{\alpha_k^{j,\epsilon,+} +2}}{\alpha_k^{j,\epsilon,+} +2} - \frac{\sigma^{\alpha_k^+}}{\sigma^{\alpha_k^-} - \sigma^{\alpha_k^+}}\frac{1-\sigma^{\alpha_k^{j,\epsilon,-} +2}}{\alpha_k^{j,\epsilon,-} +2} \leq C\,,
    \end{equation}
    for a universal constant $C>0$, in order to conclude the estimate \eqref{e:competitor-avg}. It is now elementary algebra to check that the left-hand side of the above inequality equals
    \[
    {\rm RHS}=\frac{1}{1-\sigma^{2\alpha_k^{j,\epsilon,+}}} \,\frac{\alpha_k^{j,\epsilon,+}\left(1-2\sigma^{\alpha_k^{j,\epsilon,+}+2}+\sigma^{2\alpha_k^{j,\epsilon,+}}\right) + 2\left(\sigma^{2\alpha_k^{j,\epsilon,+}}-1 \right)}{\lambda_k^{j,\epsilon}-4}\,,
    \]
    and since $\lambda_k^{j,\epsilon}\geq 5$ (and thus $\alpha_k^{j,\epsilon,+}\geq 2$) this can be estimated from above as
    \[
    {\rm RHS}\leq \frac{2\alpha_k^{j,\epsilon,+}}{\lambda_k^{j,\epsilon}-4} \frac{1-\sigma^{\alpha_k^{j,\epsilon,+}+2}}{1-\sigma^{2\alpha_k^{j,\epsilon,+}}} \leq \frac{\sqrt{\lambda_k^{j,\epsilon}}}{\lambda_k^{j,\epsilon}-4} \leq C\,,
    \]
    thus concluding the proof.
\end{proof}

Armed with Lemma \ref{l:competitor-estimates} and Proposition \ref{p:almost-min}, we are now in a position to prove Lemma \ref{l:freq-key-estimate}.

\begin{proof}[Proof of Lemma \ref{l:freq-key-estimate}]
    We will closely follow the computations in \cite{DLS_MAMS}*{Proof of Proposition 5.2} and \cite{DLSS3}*{Sections 4-7}. 

    Fix $\delta(\alpha)$ such that the conclusions \eqref{e:competitor-Dir} and \eqref{e:competitor-avg} of Lemma \ref{l:competitor-estimates} hold. We will use the preceding notation for the competitor $\Lcal$ and the eigenfunction expansion of $\Ncal|_{\partial B_r}$. Fix $\gamma > 0$, to be determined later with the desired dependencies. First of all, Proposition \ref{p:almost-min}, the estimate \eqref{e:competitor-Dir} (simplifying error terms) and \eqref{e:inner-var-D'} together yield
    \begin{align*}
        (2\alpha &+ \gamma) \Dbf(r) \\
        &\leq (1+Cr) \left[ r\Gbf(r) + \frac{\alpha(\alpha+\gamma)}{r}\Hbf(r)\right] + Cr\Dbf(r) + Cr^2\int_{\partial B_r} |\boldsymbol{\eta}\circ \Ncal|\\
        &\leq \frac{r\Dbf'(r)}{2} + \frac{\alpha(\alpha+\gamma)}{r}\Hbf(r) + \Ecal_1(r)\,,
    \end{align*}
    where, in light of the uniform upper bound for $\Ibf$ in Corollary \ref{c:freq-monotonicity} (see \cite{DLHMSS-fine-structure}*{Lemma 11.3} for the case $\bar n=1$), we obtain \eqref{e:towards-freq-der} with $\Ecal_1(r)$ as claimed in \eqref{e:freq-decay-intermediate-error}.
    
    Now let us demonstrate \eqref{e:freq-radial-der}. Thanks to \eqref{e:H'}, \eqref{e:outer-var-D} and again the uniform upper bound on $\Ibf(r)$, we have
    \begin{align*}
        \Ibf'(r) &= \frac{\Dbf(r)}{\Hbf(r)} + \frac{r\Dbf'(r)}{\Hbf(r)} - \frac{r\Dbf(r)\Hbf'(r)}{\Hbf(r)^2} \\
        &= \frac{r\Dbf'(r)}{\Hbf(r)} - \frac{2r\Dbf(r)\Ebf(r)}{\Hbf(r)^2} \\
        &= \frac{r\Dbf'(r)}{\Hbf(r)} - \frac{2\Ibf(r)^2}{r} + O\left(\Hbf(r)^{-1} r^\gamma \int_{\mathbf\Phi(B_r)} |DN|^2\right)\,.
    \end{align*}
    Combining this with \eqref{e:towards-freq-der}, we arrive at
    \begin{align*}
        \Ibf'(r) &\geq 2(2\alpha+\gamma)\frac{\Dbf(r)}{\Hbf(r)} -\frac{2\alpha(\alpha+\gamma)}{r} - 2\Hbf(r)^{-1} \Ecal_1(r) - \frac{2\Ibf(r)^2}{r} + O\left(\Hbf(r)^{-1} r^\gamma \int_{\mathbf\Phi(B_r)} |DN|^2\right) \\
        &=\frac{2}{r}(\Ibf(r)-\alpha)(\alpha+\gamma-\Ibf(r)) -\Ecal_2(r)\,,
    \end{align*}
\end{proof}
as desired.

\bibliography{references}
\bibliographystyle{plain}

\end{document}